\numberwithin{equation}{section}
\newtheorem{thm}{\textbf Theorem}[section]
\newtheorem{defi}{\textbf Definition}[section]
\newtheorem{lem}{\textbf Lemma}[section]
\newtheorem{ex}{\textbf Example}
\newtheorem{que}{\textbf Question}
\newtheorem{rem}{\textbf Remark}
\theoremstyle{remark}
\DeclareSymbolFont{bbold}{U}{bbold}{m}{n}
\DeclareSymbolFontAlphabet{\mathbbold}{bbold}
\begin{document}

\title{ Topological Rigidity of good fractal necklaces}


\author{Fan Wen}

\address{College of Mathematics\\
  School of Science and Engineering
  \\University of Tsukuba
  \\ Ibaraki 305-8575, Japan}


\email{s1936006@s.tsukuba.ac.jp}


\maketitle

\begin{abstract}
We introduce and characterize extremal $2$-cuts for good fractal necklaces. Using this characterization and the related topological properties of extremal $2$-cuts, we prove that every good necklace has a unique necklace IFS in a certain sense. Also, we prove that two good necklaces admit only rigid homeomorphisms and thus the group of self-homeomorphisms of a good necklace is countable. In addition, a certain weaker co-Hopfian property of good necklaces is also obtained.

\medskip

\noindent{\bf Keywords:} Necklace, Necklace IFS, $2$-cut, Rigidity, Co-Hopfian

\medskip

\noindent{\bf 2010 Mathematics Subject Classification:} Primary 52C20; Secondary 28A80

\end{abstract}

\section{Introduction}

The fractal necklaces had been introduced by the author in \cite{WF}, where some conditions for fractal necklaces with no cut points are obtained. The present paper is devoted to studying the topological rigidity of good fractal necklaces. Roughly speaking, a subset of $\mathbb{R}^d$ is rigid in a certain sense if the group of its related automorphisms is small. We refer to \cite{BKM,BM} for the quasisymmetric rigidity of Schotty sets and square carpets.

A map $f:\mathbb{R}^d\to \mathbb{R}^d$ is contracting, if there exists $0<c<1$ such that $|f(x)-f(y)|\leq c|x-y|$ for all $x,y\in R^d$. Let $\{f_1, f_2, \cdots, f_n\}$ be a family of contracting maps of $\mathbb{R}^d$.
According to Hutchinson \cite{H},
there is a unique nonempty compact subset $F$ of $\mathbb{R}^d$, called the attractor of $\{f_1, f_2, \cdots, f_n\}$, such that
$F=\cup_{k=1}^nf_k(F).$

The attractor $F$ is called a \emph{fractal necklace or a necklace}, if $n\geq 3$ and $f_1, f_2, \cdots, f_n$ are contracting homeomorphisms  of $\mathbb{R}^d$ satisfying
\begin{equation}\label{necklace}
f_m(F)\cap f_k(F)=\left\{
\begin{array}{cc}
\mbox{a singleton} &\mbox{if\, $|m-k|=1$ or $n-1$}\\ \\
\emptyset &\mbox{if\, $2\leq|m-k|\leq n-2$}
\end{array}
\right.
\end{equation}
for all distinct $m,k\in \{1,2,\cdots, n\}$. In this case, the ordered family $\{f_1,f_2,\cdots,f_n\}$ is called \emph{a necklace iterated function system (NIFS)}.

Let $I=\{1,2,\cdots, n\}$. For every integer $m\geq 0$ and every sequence $i_1i_2\cdots i_m\in I^m$ write $f_{i_1i_2\cdots i_m}$ for the composition $f_{i_1}\circ f_{i_2}\circ\cdots\circ f_{i_m}$ and $F_{i_1i_2\cdots i_m}$ for $f_{i_1i_2\cdots i_m}(F)$, where we prescribe $I^0=\{\varepsilon\}$ and $f_\varepsilon=id$. We call $F_{i_1i_2\cdots i_m}$ \emph{an $m$-level copy of $F$}. Denote by $\mathcal{C}_m(F)$ the collection of $m$-level copies of $F$ and let $\mathcal{C}(F)=\cup_{m=0}^\infty\mathcal{C}_m(F)$. From now on a copy of $F$ means an $m$-level copy of $F$ for some $m\geq 0$.  By the definition, two distinct copies $A$, $B$ of $F$ have one of the following four relationships:
\begin{equation}\label{gx}
\mbox{ $A\subset B$; $B\subset A$; $A\cap B=\emptyset$; $A\cap B$ is a singleton.}
\end{equation}

For every $k\in I$ denote by $z_k$ the unique common point of the $1$-level copies $F_k$ and $F_{k+1}$. We call the ordered points $z_1,z_2,\cdots, z_n$ \emph{the main nodes of $F$}. We say that two main nodes $z_k$ and $z_m$ are \emph{adjacent}, if $|k-m|=1\mbox{ or } n-1$. For a subset $A$ of $F$ denote respectively by $\partial_FA$ and $\mbox{int}_FA$ the boundary and the interior of $A$ in the relative topology of $F$. Then we have $\partial_FF_k=\{z_{k-1},z_k\}$ for each $k\in I$ and $\sharp\,\partial_FA\geq 2$ for each $A\in\cup_{m\geq 1}\mathcal{C}_m(F)$. Hereafter denote by $\sharp$ the cardinality and prescribe
\begin{equation}\label{pre}
\mbox{$F_{n+1}=F_1$ and $z_0=z_n$}.
\end{equation}

We say that a fractal necklace $F$ with a NIFS $\{f_1,f_2,\cdots, f_n\}$ is \emph{good}, if $\sharp(F_{kj}\cap\partial_FF_k)\leq 1$ for all $k,j\in I$. In this case, we also say that the NIFS is good. Equivalently, $F$ is good if and only if $F_k$ is the smallest copy containing $\{z_{k-1}, z_k\}$ for each $k\in I$.

\begin{figure}[htbp]
{\begin{minipage}{8cm}
\centering
\includegraphics[width=8cm]{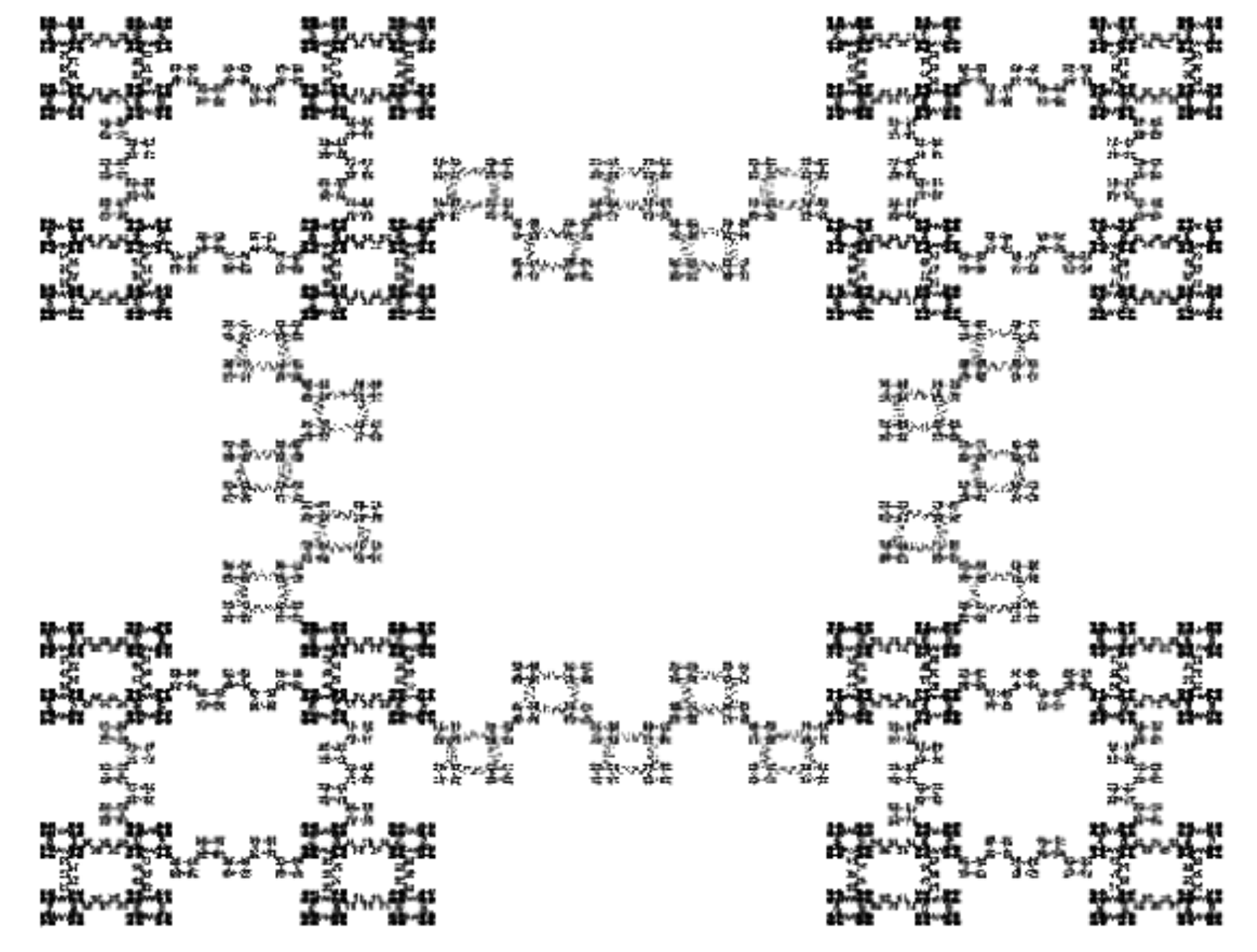}
\end{minipage}}
\caption{A necklace that is not good but has cut points.}
\end{figure}

Let $F$ be a fractal necklace. Then $F$ is path-connected and locally path-connected. Moreover, if $F$ is good then it has no cut points.
Figure 1 presents a necklace that is not good and has cut points. All of these can be found in \cite{WF}.

For a necklace, its copies and main nodes and the goodness have been defined by its given NIFS. Since two distinct NIFSs may generate the same necklace, it is natural to ask whether or not these properties of necklaces are independent of their NIFSs.

Let $\tau$ and $s$ be two permutations of $I$, where
\begin{equation}\label{necklace}
\tau(k)=\left\{
\begin{array}{cc}
k+1 &\mbox{if\, $1\leq k<n$}\\ \\
1 &\mbox{if\, $k=n$}
\end{array}
\right.
\end{equation}
and $s(k)=n-k+1$ for all $k\in I$.  Let $\mathcal{G}_n$ be the group generated by $\tau$ and $s$.
Then $\mathcal{G}_n$ is a dihedral group of $2n$ elements. Let $F$ be a necklace with a NIFS $\{f_1,f_2,\cdots,f_n\}$ on $\mathbb{R}^d$ and $\sigma\in \mathcal{G}_n$. We easily see that $\{f_{\sigma(1)},\, f_{\sigma(2)},\, \cdots, \, f_{\sigma(n)}\}$ remains a NIFS that generates the necklace $F$. We shall study the following question:

\begin{que}
What are the generating NIFSs of $F$?
\end{que}

We shall prove that every good necklace has a unique NIFS in the following sense.

\begin{thm}\label{mt1}
Let $F$ be a necklace with a good NIFS $\{f_1,f_2,\cdots,f_n\}$ on $\mathbb{R}^d$. Then for each NIFS $\{g_1,g_2,\cdots,g_m\}$ of $F$ we have

(1) $m=n$ and

(2) There is a permutation $\sigma\in \mathcal{G}_n$ such that $g_k(F)=f_{\sigma(k)}(F)$ for each $k\in I$.
\end{thm}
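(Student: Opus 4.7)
The plan is to leverage the intrinsic characterization of extremal $2$-cuts (established in the preceding sections of this paper) in order to show that the set of main nodes and the cyclic arrangement of $1$-level copies of a good necklace are topological invariants of $F$, depending only on $F$ and not on the generating NIFS. Once this is known, any other NIFS must produce the same partition of $F$ into $1$-level copies, up to a dihedral relabeling in $\mathcal{G}_n$.

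First I would verify that, under the goodness hypothesis, the adjacent main-node pairs $\{z_{k-1},z_k\}$ ($k\in I$) are exactly the extremal $2$-cuts of $F$, and that they appear in a well-defined cyclic order up to reflection. Since the notion of extremal $2$-cut is purely topological, this cyclic arrangement depends only on $F$. Consequently, for any NIFS $\{g_1,\dots,g_m\}$ of $F$ with main nodes $w_1,\dots,w_m$ and $1$-level copies $G_k=g_k(F)$, each boundary pair $\{w_{k-1},w_k\}=\partial_FG_k$ disconnects $F$ and hence must also be an extremal $2$-cut; so it equals some pair $\{z_{j-1},z_j\}$ coming from the good NIFS. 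A counting argument, based on the intrinsic cyclic ordering of the $n$ extremal $2$-cut pairs, then yields $m=n$ and produces a permutation $\sigma\in\mathcal{G}_n$ such that $\{w_{k-1},w_k\}=\{z_{\sigma(k)-1},z_{\sigma(k)}\}$ for every $k$; the fact that $\sigma$ lies in the dihedral group (rather than being an arbitrary permutation) comes from the fact that the NIFS labelling is forced to follow the intrinsic cyclic/reflective order of the extremal $2$-cuts.

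It then remains to upgrade the equality of boundaries to the equality $g_k(F)=f_{\sigma(k)}(F)$. Here the goodness hypothesis is crucial: $F_{\sigma(k)}$ is by definition the \emph{smallest} copy in $\mathcal{C}(F)$ whose boundary in $F$ is $\{z_{\sigma(k)-1},z_{\sigma(k)}\}$. On the other hand, $G_k$ is a homeomorphic image of $F$ with the same boundary in $F$, and I would use the connectedness of both $F$ and $G_k$, together with the hierarchical intersection pattern \eqref{gx}, to argue that the pair $\{z_{\sigma(k)-1},z_{\sigma(k)}\}$ separates $F$ into exactly two connected pieces, one of which is $F_{\sigma(k)}$. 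This would force $G_k$ to coincide with $F_{\sigma(k)}$ (the other piece being the closure of its complement, which corresponds to an adjacent run of $1$-level copies and therefore cannot be a homeomorphic image of $F$ with only two boundary points unless it is itself a single copy).

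The main obstacle is this last step. Because $\{g_1,\dots,g_m\}$ is not assumed to be good, the copies $G_k$ need not \emph{a priori} belong to the collection $\mathcal{C}(F)$ defined by the original NIFS; only topological data (the extremal $2$-cut endpoints) are available to pin them down. Establishing that every extremal $2$-cut pair $\{z_{k-1},z_k\}$ produces a unique partition of $F$ into two connected pieces, and that the piece identifiable with a homeomorphic image of $F$ is necessarily the good $1$-level copy $F_k$, will be the heart of the argument, leaning on the no-cut-point property of good necklaces together with the characterization of extremal $2$-cuts.
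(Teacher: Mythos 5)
Your overall strategy coincides with the paper's: identify the family of extremal $2$-cuts as a topological invariant equal to $\{\{z_{k-1},z_k\}:k\in I\}$, deduce that the boundary pairs $\{w_{k-1},w_k\}$ of the second NIFS must be among them and inherit a dihedral relabeling, and then upgrade equality of boundary pairs to equality of the $1$-level copies themselves. However, there are two issues.

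First, a slip in the middle: you write that each $\{w_{k-1},w_k\}=\partial_F G_k$ ``disconnects $F$ and hence must also be an extremal $2$-cut.'' Disconnecting is not sufficient for extremality; what the paper actually uses is that the first implication of Theorem~\ref{mp1} holds for \emph{any} necklace without cut points, so applied to $F$ under the $\{g_1,\dots,g_m\}$ parametrization it yields both $N_2(F)=m-2$ (hence $m=n$, directly and without your cyclic-counting detour) and the extremality of each $\{w_{k-1},w_k\}$. That is the step you need, and it is not merely ``disconnection.''

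Second and more importantly, the final identification step — which you yourself flag as the main obstacle — is left genuinely open, and the argument you sketch is not quite right. You claim $\{z_{\sigma(k)-1},z_{\sigma(k)}\}$ separates $F$ into ``exactly two connected pieces, one of which is $F_{\sigma(k)}$,'' but even in the good case $F\setminus\{z_{k-1},z_k\}$ can have three components: $F\setminus F_k$ plus two components of $F_k\setminus\{z_{k-1},z_k\}$ when $z_{k-1},z_k$ are nonadjacent main nodes of $F_k$. Since $\mathrm{int}_F G_k$ is a union of such components, your dichotomy does not immediately pin $G_k$ down. The paper closes this gap with Theorem~\ref{mp1+}: for any NIFS (no-cut-point case), $F\setminus g_k(F)$ is an extremal component of $F\setminus\{w_{k-1},w_k\}$, and in the good case $F\setminus f_{\sigma(k-1)}(F)$ is the \emph{unique} extremal component of that cut-complement; equality of these forces $g_k(F)=f_{\sigma(k-1)}(F)$. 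Your intuition that the ``arc-like'' complement has cut points while a genuine copy does not is pointing in the same direction, but without the precise statement and proof of the extremal-component uniqueness, the argument is incomplete.
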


\begin{rem}{\rm
By Theorem \ref{mt1}, we see that, if $F$ is a necklace with a good NIFS, then all NIFSs of $F$ are good and its copies and main nodes are actually independent of the choice of its NIFSs.}
\end{rem}

\begin{defi}
We say that a homeomorphism of two necklaces $F$ and $G$ is rigid, if the image of every $m$-level copy of $F$ is an $m$-level copy of $G$  for every $m\geq 0$.
\end{defi}

Denote by $h(F,G)$ the family of homeomorphisms of $F$ onto $G$.

\begin{thm}\label{rig}
Let $F$ and $G$ be two topologically equivalent good fractal necklaces in $\mathbb{R}^d$. Then every homeomorphism of $F$ onto $G$ is rigid. Furthermore, $h(F,G)$ is countable, in particular, the group $h(F,F)$ of homeomorphisms of $F$ is countable.
\end{thm}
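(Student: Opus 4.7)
The proof has two parts: rigidity and countability.

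For rigidity, I would exploit the topological characterization of $1$-level copies via extremal $2$-cuts that is developed earlier in the paper (as announced in the abstract). Specifically, one shows that each $1$-level copy $F_k$ is precisely the closure of a component of $F \setminus \{z_{k-1}, z_k\}$, where $\{z_{k-1}, z_k\} = \partial_F F_k$ is an extremal $2$-cut. Since extremal $2$-cuts are defined purely topologically, any homeomorphism $h \colon F \to G$ must map them to extremal $2$-cuts of $G$, and hence $1$-level copies to $1$-level copies. The induced permutation preserves adjacency of copies (equivalently, adjacency of main nodes) and therefore lies in $\mathcal{G}_n$. For $m \geq 2$, I would proceed by induction on $m$: each restriction $h|_{F_k} \colon F_k \to G_{\sigma(k)}$ is a homeomorphism between two good fractal necklaces (since a sub-copy of a good necklace is itself a good necklace), so applying the base case to this restriction yields rigidity at level $m+1$.

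For countability, by rigidity each $h \in h(F, G)$ is encoded by a family $\{\pi_I\}$ of permutations in $\mathcal{G}_n$, where $\pi_I$ specifies how $h$ permutes the children of the copy $F_I$. The top-level $\pi_\emptyset$ has only $2n$ choices. For each deeper copy $F_I$, the boundary $\partial_F F_I$ (two main nodes shared with adjacent copies of the same level) must map to $\partial_G h(F_I)$ in a way already determined by the parent's permutation. Combining this boundary constraint with the uniqueness of good NIFSs from Theorem~\ref{mt1} and the topological characterization of extremal $2$-cuts within $F_I$, I would pin down $\pi_I$ from the parent's data, leaving at most countably many consistent families $\{\pi_I\}$ in total; hence $h(F, G)$ is countable.

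The main obstacle is the countability claim: a priori the space of tree isomorphisms $\prod_I \mathcal{G}_n$ has cardinality $(2n)^{\aleph_0}$, which is uncountable, so the argument must exploit the topological rigidity to collapse most of these formal tree isomorphisms (either identifying them or showing they fail to define continuous maps). The hard step, I expect, is to show that at each level $\pi_I$ is determined (up to at most finitely many choices that do not compound across levels) by the parent's permutation through the extremal $2$-cut machinery applied inside $F_I$, so that no continuous-parameter freedom is ever introduced as the tree of copies is traversed.
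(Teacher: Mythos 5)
For the rigidity half, your route is the same as the paper's: $1$-level copies of $F$ and $G$ are recovered topologically as closures of extremal components of extremal $2$-cuts (Theorems~\ref{mp1} and \ref{mp1+}), so any homeomorphism $h$ permutes $1$-level copies; the induced permutation respects adjacency of main nodes and hence lies in $\mathcal{G}_n$; and one inducts by applying the base case to the restrictions $h|_{F_k}$ (using that copies of a good necklace are again good necklaces). This part is correct and matches the paper.

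For the countability half you depart from the paper, and your skepticism is well placed, but your proposal is not a complete proof. The paper restricts $h$ to the dense node set $M_F=\bigcup_m M_{F,m}$ and asserts that the collection $\Phi$ of \emph{all} bijections $\phi\colon M_F\to M_G$ with $\phi(M_{F,m})=M_{G,m}$ is ``clearly countable.'' That assertion, as stated, does not hold: $\Phi$ is the inverse limit over $m$ of the finite sets of level-$m$ bijections, and since $\sharp(M_{F,m+1}\setminus M_{F,m})\to\infty$, the branching is at least $2$ at every level, so $\Phi$ in fact has the cardinality of the continuum. This is exactly the difficulty you flag in your own language: the product $\prod_I\mathcal{G}_n$ is a priori uncountable, and rigidity alone does not finish the job. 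Your proposed fix --- that each $\pi_I$ is forced, up to a uniformly bounded non-compounding ambiguity, by its parent through the images of the boundary pair $\partial_F F_I$ --- is the right target: by goodness $\partial_F F_I$ is a pair of non-adjacent main nodes of $F_I$, and an element of the dihedral group $\mathcal{G}_n$ is determined by the ordered image of a non-antipodal pair, so ambiguity can survive only when $n$ is even and the boundary pair is antipodal, and one must then show that this residual choice cannot be made independently at infinitely many copies. You leave precisely this as ``the hard step,'' so the proposal as written does not yet establish countability; it does, however, correctly identify the step that a careful proof must close, and which the paper's $\Phi$-argument does not visibly settle.
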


A topological space $X$ is  co-Hopfian, if every topological embedding of $X$ into itself is onto; see \cite{WW}. By contrast, we prove that every good fractal necklace has a weaker co-Hopfian property as follows.

\begin{thm}\label{emb}
Let $F$ and $G$ be two topologically equivalent good necklaces in $\mathbb{R}^d$ and let $h$ be a topological embedding of $F$ into $G$. Then $h(F)$ is a copy of $G$.
\end{thm}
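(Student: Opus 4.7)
The plan is to identify the smallest copy $C$ of $G$ that contains $h(F)$, and then to prove that $h(F)=C$. For the first step, let $\mathcal{F}=\{D\in\mathcal{C}(G):h(F)\subseteq D\}$. This family is nonempty since $G\in\mathcal{F}$. If $A,B\in\mathcal{F}$ then $A\cap B\supseteq h(F)$, which has more than one point (since $h(F)\cong F$ and $F$ has at least three main nodes), so by the dichotomy \eqref{gx} either $A\subseteq B$ or $B\subseteq A$; hence $\mathcal{F}$ is totally ordered by inclusion. Because the diameter of a level-$m$ copy of $G$ tends uniformly to $0$ while $\mathrm{diam}(h(F))>0$, the levels of the copies in $\mathcal{F}$ are bounded, so $\mathcal{F}$ has a minimum element $C$. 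Writing $C=g_w(G)$ for some word $w$, the conjugated family $\{g_w g_k g_w^{-1}\}_{k=1}^n$ is a good NIFS on $\mathbb{R}^d$ generating $C$, so $C$ is itself a good necklace, topologically equivalent to $G$ and hence to $F$, and $h(F)$ is a closed subset of $C$ homeomorphic to $C$.

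For the second step I would prove the following core topological fact: \emph{if $C$ is a good necklace and $S\subseteq C$ is a closed subset homeomorphic to $C$ that is not contained in any proper subcopy of $C$, then $S=C$.} Applying it with $S=h(F)$ and using the minimality of $C$ yields $h(F)=C$, which is a copy of $G$, finishing the theorem. To prove the core fact I would argue by contradiction. Assume $S\subsetneq C$, and let $C_1,\ldots,C_n$ be the level-$1$ subcopies of $C$ with main nodes $z_1,\ldots,z_n$. Since $S$ is not contained in any single $C_k$, it meets at least two distinct subcopies. Using that $S$ is connected and has no cut points (both inherited from $C$ via the homeomorphism $C\cong S$), together with the extremal $2$-cut characterization of main nodes developed earlier in the paper, I would show that every main node $z_k$ of $C$ must lie in $S$. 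Iterating the argument inside each $C_k$ via the same extremal $2$-cut technology, one shows that $S$ contains every main node of $C$ at every level. Since these nodes are dense in $C$ and $S$ is closed, $S=C$, contradicting $S\subsetneq C$.

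The main obstacle is the step showing that $S$ must contain every main node of $C$. The subtlety is that $S$ is only homeomorphic to $C$ and does not a priori carry any NIFS-structure as a subset of $C$, so the main nodes of $S$ cannot be matched directly with those of $C$; one must identify main nodes intrinsically (as extremal $2$-cuts) and then transport the cut-point analysis through the inclusion $S\hookrightarrow C$. The iterative refinement — propagating the conclusion from level $1$ down through all levels while ensuring that $S\cap C_k$ retains enough topological structure to reapply the argument — is the other delicate point, and relies crucially on the same machinery already used to prove Theorems \ref{mt1} and \ref{rig}.
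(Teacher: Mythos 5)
Your overall strategy coincides with the paper's: pass to the smallest copy $C$ of $G$ containing $h(F)$, then show that $h(F)$ must contain every main node of $C$ at every level, and conclude $h(F)=C$ by density of these nodes. The first half of your argument (existence and goodness of $C$, and the level-$0$ claim that $S=h(F)$ must hit all $n$ main nodes of $C$ because it meets at least two first-level subcopies, is connected, and has no cut points) is sound and matches the paper.

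The genuine gap is exactly where you flag it: the ``iterative refinement.'' You propose to propagate the argument by looking at $S\cap C_k$, but note yourself that you cannot guarantee it ``retains enough topological structure.'' That worry is real: $S\cap C_k$ is not \emph{a priori} a good necklace, a copy, or even homeomorphic to anything you control, so you cannot directly re-apply the level-$0$ lemma inside $C_k$. The paper resolves this by iterating on a different object. Using that $\{\{h(z_{k-1}),h(z_k)\}\}_k$ and $\{\{w_{k-1},w_k\}\}_k$ are both the family of extremal $2$-cuts of $h(F)$ (the first by Lemma~\ref{inv} and Theorem~\ref{mp1}, the second because $h(F)\supset\{w_1,\dots,w_n\}$), the argument of Theorem~\ref{mt1} produces a permutation $\sigma\in\mathcal G_n$ with $\partial_{h(F)}\bigl(h(f_{\sigma(k)}(F))\bigr)=\{w_{k-1},w_k\}=\partial_G(g_k(G))$. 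From this boundary matching plus connectedness/no-cut-points one gets $h(f_{\sigma(k)}(F))\subseteq g_k(G)$, and goodness of $G$ forces $g_k(G)$ to be the \emph{smallest} copy containing it. So the object to iterate on is the restricted embedding $h\colon f_{\sigma(k)}(F)\hookrightarrow g_k(G)$ between two good necklaces, which is exactly the hypotheses of the level-$0$ step; this gives a clean induction showing $h(F)\supset M_{G,m}$ for all $m$. Only after this is established does one know that $S\cap C_k$ equals $h(f_{\sigma(k)}(F))$ and hence is nice. To turn your sketch into a proof, replace ``iterate inside $S\cap C_k$'' with ``iterate on the restriction of $h$ to $1$-level copies of $F$, pinned down by the extremal $2$-cut permutation,'' supplying the boundary-matching and minimality arguments above.
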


In Theorem \ref{emb}, the assumption that $F$ and $G$ are topologically equivalent can not be removed off. Indeed, a good necklace $F$ may have a subset that is a good necklace, but it is not any copy of $F$. The readers easily see this from the standard Sierpinski gasket.

\medskip

We shall introduce and characterize extremal $2$-cuts for good necklaces in Section 2. Using this characterization and some related properties of extremal $2$-cuts, we shall prove the above theorems in Section 3. We conjecture that these theorems hold for all necklaces. However, since the extremal $2$-cuts for general necklaces are more elusive, it seems very difficult to prove (disprove) this conjecture.

\section{$2$-cuts of necklaces with no cut points}

In this section we discuss the $2$-cuts of necklaces with no cut points and the related topological invariants.

Let $X$ be a connected topological space and $A\subset X$. We say that $A$ is \emph{a cut of $X$}, if $X\setminus A$ is not connected and $X\setminus B$ is connected for each $B\subsetneq A$. A cut consisting of $k$ points is called \emph{a $k$-cut}.
A $1$-cut is also called a cut point.

For each subset $A$ of $X$ define
\begin{equation}\label{n}
N(A,X)=\sup\{\mbox{ncp}(\overline{C}): C \mbox{ is a component of } X\setminus A\},
\end{equation}
where $\overline{C}$ is the closure of $C$ in $X$, $\mbox{ncp}(\overline{C})$ denotes the number of cut points of $\overline{C}$, a component means a maximal connected subset. We say that a component $C$ of $X\setminus A$ is \emph{extremal}, if $\mbox{ncp}(\overline{C})=N(A,X)$.

For each integer $k\geq 1$ define
\begin{equation}\label{in}
N_k(X):=\sup\{N(A,X): A \mbox{ is a $k$-cut of } X\}.
\end{equation}
We say that a $k$-cut $A$ of $X$ is \emph{extremal}, if $N(A,X)=N_k(X)$.

\begin{lem}\label{inv}
Let $h:X\to Y$ be a homeomorphism of two connected topological spaces. Then we have the followings.

(1) Let $A$ be a cut of $X$. Then $h(A)$ is a cut of $Y$  and
$$N(h(A),Y)=N(A,X).$$ Moreover, $C$ is an extremal component of $X\setminus A$ if and only if $h(C)$ is an extremal component of $Y\setminus h(A)$.

(2) Let $k\geq 1$ be an integer. Then $N_k(Y)=N_k(X)$. Moreover, $A$ is an extremal $k$-cut of $X$ if and only if $h(A)$ is an extremal $k$-cut of $Y$.
\end{lem}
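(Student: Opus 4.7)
The plan is to reduce everything to the observation that a homeomorphism transports all purely topological invariants, then simply unwind the definitions of cut, component, extremal component, and extremal $k$-cut. Since the statement has no geometric content specific to necklaces, I expect no real obstacle; the only place to be careful is that $\overline{C}$ denotes the closure of $C$ in the whole space $X$ (not in $X \setminus A$), so one must use that $h$ is a homeomorphism of the ambient spaces to commute $h$ with closure.

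For part (1), I would first verify that $h(A)$ is a cut of $Y$. Because $h$ restricts to a homeomorphism $X \setminus A \to Y \setminus h(A)$ and connectedness is a topological invariant, $Y \setminus h(A)$ is disconnected. For the minimality condition, any proper subset $B \subsetneq h(A)$ has preimage $h^{-1}(B) \subsetneq A$, so $X \setminus h^{-1}(B)$ is connected by hypothesis; its homeomorphic image $Y \setminus B$ is therefore connected as well. Next, the correspondence $C \mapsto h(C)$ is a bijection between components of $X \setminus A$ and components of $Y \setminus h(A)$, since the partition of a space into components is preserved by homeomorphism. As $h : X \to Y$ is a global homeomorphism, $h(\overline{C}) = \overline{h(C)}$, and the restriction $h|_{\overline{C}} : \overline{C} \to \overline{h(C)}$ is itself a homeomorphism. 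The property of being a cut point of a space is intrinsic and topological, so $\mathrm{ncp}(\overline{C}) = \mathrm{ncp}(\overline{h(C)})$. Taking the supremum over all components of $X \setminus A$ (matched via $C \leftrightarrow h(C)$ with those of $Y \setminus h(A)$) yields
\[
N(h(A), Y) = N(A, X),
\]
and it also follows immediately that $C$ is extremal in $X \setminus A$ if and only if $h(C)$ is extremal in $Y \setminus h(A)$.

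For part (2), applying part (1) to $h$ and to $h^{-1}$ shows that $A \mapsto h(A)$ restricts to a bijection between the $k$-cuts of $X$ and those of $Y$. Since this bijection preserves $N(\cdot, \cdot)$ by part (1), taking the supremum over all $k$-cuts gives $N_k(X) = N_k(Y)$. Consequently $A$ attains $N_k(X)$ if and only if $h(A)$ attains $N_k(Y)$, which is exactly the assertion about extremal $k$-cuts. The whole argument is essentially bookkeeping of topological invariants; the only pitfall I would flag is to remember that the closure in the definition of $N(A,X)$ is taken in $X$, which is why the global homeomorphism $h$ (and not merely $h|_{X \setminus A}$) is needed to identify $h(\overline{C})$ with $\overline{h(C)}$.
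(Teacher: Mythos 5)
Your proof is correct and fills in exactly the bookkeeping that the paper dismisses with ``It is immediate''; the approach (transport of topological invariants along $h$, plus the observation that $h(\overline{C})=\overline{h(C)}$ because $h$ is a homeomorphism of the ambient spaces) is the intended one. No gaps.
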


\begin{proof}
It is immediate.
\end{proof}

\begin{lem}\label{cs2}
Let $X$ be a connected and locally connected metric space. Let $A$ be a $k$-cut of $X$, where $k\geq 1$ is an integer. Then we have

1) $\partial_X C=A$ for every component $C$ of $X\setminus A$, and

2) $U\setminus A$ is not connected for each neighborhood $U$ of $A$ in $X$, where a subset $U$ of $X$ is called a neighborhood of $A$, if $A\subset\mbox{int}_XU$.
\end{lem}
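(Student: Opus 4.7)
\textbf{Proof plan for Lemma \ref{cs2}.}

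\textbf{Preliminary observations.} Since $A$ is finite and $X$ is metric, $A$ is closed, so $X\setminus A$ is open. Because $X$ is locally connected, every component of an open set is open; in particular, each component $C$ of $X\setminus A$ is open in $X$, hence clopen in $X\setminus A$. Consequently the closure satisfies $\overline{C}\subseteq C\cup A$. These facts will be used throughout.

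\textbf{Part (1), inclusion $\partial_X C\subseteq A$.} Let $x\in\partial_X C$. Since $C$ is open in $X$, we have $x\notin C$. Suppose $x\notin A$; then $x$ lies in some component $C'$ of $X\setminus A$, which is open and disjoint from $C$. But then $C'$ is a neighborhood of $x$ missing $C$, contradicting $x\in\partial_X C$. Hence $x\in A$.

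\textbf{Part (1), inclusion $A\subseteq \partial_X C$.} This is the step that genuinely uses the minimality in the definition of a cut. Fix $a\in A$. Because $A$ is a $k$-cut, $B:=A\setminus\{a\}$ is a proper subset of $A$, so $Y:=X\setminus B=(X\setminus A)\cup\{a\}$ is connected. Write $X\setminus A=C\sqcup D$ where $D$ is the union of all components of $X\setminus A$ other than $C$; since $A$ is a cut, $D\neq\emptyset$. Assume for contradiction that $a\notin\overline{C}$. Combining with $\overline{C}\subseteq C\cup A$ one gets $\overline{C}\cap Y=C$, so $C$ is closed in $Y$; $C$ is also open in $Y$ since it is open in $X$. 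Thus $C$ is a nonempty proper clopen subset of the connected set $Y$ (proper because $D\subseteq Y\setminus C$ is nonempty), contradicting connectedness. Therefore $a\in\overline{C}$, and since $a\notin C$, $a\in\partial_X C$.

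\textbf{Part (2).} Let $U$ be a neighborhood of $A$ and write $V=\mathrm{int}_X U$, so $A\subseteq V\subseteq U$. Keep the notation $X\setminus A=C\sqcup D$ from above, and pick an arbitrary component $D_1$ of $D$. By part (1) applied to $C$ and to $D_1$, we have $\partial_X C=\partial_X D_1=A$. For any $a\in A$, $V$ is an open neighborhood of $a$, so $V$ meets both $C$ and $D_1$; in particular $U\cap C$ and $U\cap D$ are both nonempty. They are disjoint, and each is open in $X$ (hence in $U\setminus A$). Thus
\begin{equation*}
U\setminus A=(U\cap C)\sqcup(U\cap D)
\end{equation*}
is a separation of $U\setminus A$, proving it is not connected.

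\textbf{Expected obstacle.} The only subtle point is part (1)'s second inclusion: one must exploit that a cut is \emph{minimal} (not merely a disconnecting set) in order to force $a\in\overline{C}$, which is exactly where the $Y=X\setminus(A\setminus\{a\})$ connectedness argument enters. The rest of the lemma is essentially bookkeeping once local connectedness gives the clopenness of components.
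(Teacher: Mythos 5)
Your proof is correct, and it takes a genuinely different (and somewhat cleaner) route than the paper's. For the inclusion $A\subseteq\partial_X C$ in part (1), the paper argues at the level of the set $\partial_X C$: it shows that $C$ is clopen in $X\setminus\partial_X C$, concludes that $\partial_X C$ disconnects $X$, and then invokes minimality of the cut to force $\partial_X C=A$. You instead argue pointwise: fixing $a\in A$, you show that $a\notin\overline{C}$ would make $C$ a nonempty proper clopen subset of $X\setminus(A\setminus\{a\})$, contradicting the connectedness guaranteed by minimality. Both hinge on the same minimality hypothesis, but yours isolates exactly the one point being tested and avoids the slightly awkward manipulation of $(A\setminus\partial_X C)\cup(X\setminus A)$. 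For part (2), the paper argues by contradiction at the global level: if $U\setminus A$ were connected it would sit inside a single component $C$, and then $C\cup A$ would be clopen in the connected space $X$, which is impossible. You instead directly exhibit a separation $U\setminus A=(U\cap C)\sqcup(U\cap D)$, using part (1) to guarantee that $\mathrm{int}_X U$ meets both $C$ and $D$, which is more constructive and makes part (2) a quick corollary of part (1). One minor slip: you assert that $U\cap C$ and $U\cap D$ are ``open in $X$,'' but $U$ need not be open, so this isn't literally true; what you actually need and do have is that they are open in $U\setminus A$ (since $C$ and $D$ are open in $X$, their intersections with the subspace $U\setminus A$ are relatively open). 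With that phrase corrected, the argument is complete.
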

\begin{proof} 1) Let $C$ be a component of $X\setminus A$. Then $C$ is closed in $X\setminus A$. Since $X$ is locally connected and $A$ is finite, $X\setminus A$ is locally connected, so $C$ is also open in $X\setminus A$. Thus $\partial_X C\subseteq A$.

Next we prove $A\subseteq\partial_X C$. Since $C$ is open in $X\setminus A$, we easily see that $C$ is open in $(A\setminus\partial_X C)\cup(X\setminus A)$. On the other hand, since $C$ is closed in $X\setminus A$, we have $$C=(C\cup\partial_X C)\cap(X\setminus A).$$
As $\partial_X C\subseteq A$ is proved, we have $(C\cup\partial_X C)\cap(A\setminus\partial_X C)=\emptyset$, so
$$C=(C\cup\partial_X C)\cap((A\setminus\partial_X C)\cup(X\setminus A)).$$
Thus $C$ is also closed in $(A\setminus\partial_X C)\cup(X\setminus A)$.

It then follows that $(A\setminus\partial_X C)\cup (X\setminus A)$ is not connected. Since $A$ is a cut of $X$, we get $A\setminus\partial_X C=\emptyset$,
so $A\subseteq\partial_X C$.

2) Let $U$ be a neighborhood of $A$. Suppose $U\setminus A$ is connected. Then $X\setminus A$ has a component $C$ with $C\supset U\setminus A$, so
\begin{equation}\label{clos}
A\subset\mbox{int}_X(C\cup A)
\end{equation}
As mentioned, $C$ is open in $X\setminus A$, which together with (\ref{clos}) implies that $C\cup A$ is open in $X$. On the other hand, as was shown, one has $\partial_XC=A$, which together with (\ref{clos}) implies that  $\partial_X(C\cup A)=\emptyset$, so $C\cup A$ is closed in $X$. Since $X$ is connected, we then get $C\cup A=X$, which yields $C=X\setminus A$, a contradiction.
\end{proof}

\begin{rem}{\rm
Under the condition of Lemma \ref{cs2}, if $x$ is a cut point of $X$ and $U$ is a connected neighborhood of $x$ then $x$ is a cut point of $U$.}
\end{rem}

From now on denote by $F$ a necklace with a NIFS $\{f_1, f_2,\cdots, f_n\}$ on $\mathbb{R}^d$ and by $z_1,z_2,\cdots, z_n$ its ordered main nodes.
As mentioned, we prescribe $z_0=z_n$. The main results of this section are the following theorems.

\begin{thm}\label{mp1}
If $F$ has no cut points, then
$N_2(F)=n-2$ and $\{z_{k-1},z_k\}$, $k \in I$, are extremal $2$-cuts of $F$. Moreover, if $F$ is good, then $\{z_{k-1},z_k\}$, $k \in I$, are the only extremal $2$-cuts of $F$.
\end{thm}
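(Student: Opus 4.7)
The plan is to break the proof into three parts: (1) the lower bound $N_2(F) \ge n-2$, witnessed by $A_k := \{z_{k-1}, z_k\}$, (2) the matching upper bound $N_2(F) \le n-2$, and (3) uniqueness of extremal $2$-cuts when $F$ is good.

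For (1), the necklace condition gives $F_k \cap F_j \subseteq \{z_{k-1}, z_k\}$ for every $j \neq k$, so removing $A_k$ disconnects $F_k$ from $F \setminus F_k$; since $F$ has no cut points, no single-point subset of $A_k$ cuts $F$, so $A_k$ is a $2$-cut. The outer component $\overline{F \setminus F_k} = F_{k+1} \cup \cdots \cup F_{k-1}$ is a chain of $n-1$ copies joined at the main nodes $z_{k+1}, \ldots, z_{k-2}$. I would verify that this chain has exactly $n-2$ cut points: each of these $n-2$ internal main nodes is a cut point because the cyclic return through $F_k$ is no longer available; while any point $x \in \mathrm{int}_F(F_j)$ with $j \neq k$ is not a cut point, since $F_j \setminus \{x\}$ stays connected ($F_j \cong F$ has no cut points), leaving the chain intact. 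Hence $N(A_k, F) \ge n-2$.

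For (2), the crux is the claim that every cut point of $\overline{C}$, for any component $C$ of $F \setminus A$, is a main node of $F$. Suppose to the contrary $x \in \mathrm{int}_F(F_j)$ is such a cut point. By local connectivity of $F$, pick a small open connected neighborhood $U$ of $x$ with $U \subseteq \mathrm{int}_F(F_j) \setminus A$; then $U \subseteq C$, and by the remark following Lemma \ref{cs2}, $x$ is a cut point of $U$. I would derive a contradiction by exploiting that $F_j \setminus \{x\}$ is connected and that $F_j$ meets $F \setminus F_j$ only through the gateways $z_{j-1}, z_j$, allowing one to route paths in $\overline{C}$ around $x$. Granting the claim, connectivity of $\overline{C}$ forces the copies fully contained in $\overline{C}$ to form a contiguous cyclic arc in the necklace; since $\overline{C} \subsetneq F$, this arc spans at most $n-1$ copies, so at most $n-2$ internal main nodes can serve as cut points. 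Combined with (1), this yields $N_2(F) = n-2$ and shows each $A_k$ is extremal.

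For (3), let $F$ be good and $A$ an extremal $2$-cut. If both points of $A$ are main nodes $z_i, z_\ell$, removing them splits $F$ into two cyclic arcs of $p$ and $n-p$ consecutive copies with $p-1$ and $n-p-1$ internal cut points; the maximum $n-2$ is attained only when $\{p, n-p\} = \{1, n-1\}$, which forces $A = A_k$ for some $k$. If $A$ contains a non-main node $a \in \mathrm{int}_F(F_j)$, I would invoke the goodness condition $\sharp(F_{jr} \cap \{z_{j-1}, z_j\}) \le 1$ — equivalently, that $F_j$ is the smallest copy containing $\{z_{j-1}, z_j\}$ — to show that no component of $F \setminus A$ has closure with $n-2$ cut-point main nodes, since goodness precludes an interior point of $F_j$ from playing the separating role of an adjacent pair of main nodes at a deeper scale. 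The main obstacle I anticipate is the sub-claim in (2) that non-main-node points cannot be cut points of $\overline{C}$: it requires a delicate topological argument combining the remark after Lemma \ref{cs2}, local connectivity of $\overline{C}$ at interior points of $C$, the absence of cut points in $F_j$, and the gateway structure of the $1$-level copies.
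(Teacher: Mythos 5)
Your part (1) is sound and matches the paper in spirit. The problem is the central claim in part (2), on which part (3) also leans: ``every cut point of $\overline{C}$, for any component $C$ of $F\setminus A$, is a main node of $F$.'' This is \emph{false}, and the routing argument you sketch cannot repair it. The reason the routing fails is that $\overline{C}$ need not contain all of $F_j$: if the $2$-cut $A$ sits inside $F_j$, then $\overline{C}$ contains only part of $F_j$, and a path in $F_j\setminus\{x\}$ that would route around $x$ may be forced to pass through the portion of $F_j$ that is \emph{not} in $\overline{C}$. Concretely, with $n=5$, suppose $z_0\in F_{11}$ and $z_1\in F_{14}$ (so $F$ is good) and take $A=\partial_F F_{12}=\{u_1,u_2\}$, where $u_j$ denotes the main node of $F_1$ between $F_{1j}$ and $F_{1,j+1}$. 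Then $A$ is a $2$-cut, one component of $F\setminus A$ is $F_{12}\setminus A$, and the other has closure $\overline{F\setminus F_{12}}$. In $\overline{F\setminus F_{12}}$, the copy $F_{13}$ is attached to the rest only at $u_3$ (its other gateway $u_2$ leads into the removed $\mathrm{int}_F F_{12}$, and $F_{13}$ contains no main node of $F$). Hence $u_3$ is a cut point of $\overline{F\setminus F_{12}}$, but $u_3$ is a main node of $F_1$, not of $F$. Your inference that the copies in $\overline{C}$ form a contiguous arc of at most $n-1$ first-level copies, with the cut points being the $n-2$ internal main nodes of $F$, therefore does not follow.

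The paper sidesteps this by \emph{not} trying to locate cut points of components globally. Instead it descends: by Lemma~\ref{lcp}, every $2$-cut $\{z,w\}$ consists of two main nodes of the smallest copy $F_{i_1\cdots i_m}$ such that $\{z,w\}$ cuts every copy on the descent, and $N(\{z,w\},F_{i_1\cdots i_m})\le n-2$ there; then Lemma~\ref{pp1} gives the inductive inequality $N(\{z,w\},F_{i_1\cdots i_{j-1}})\le\max\{n-2,\,N(\{z,w\},F_{i_1\cdots i_j})\}$, which telescopes back up to $N(\{z,w\},F)\le n-2$. This bypasses the question of which points of $\overline{C}$ are cut points: the relevant nodes are main nodes of \emph{some} copy, not necessarily of $F$. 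For part (3), your reduction to ``two main nodes of $F$'' vs.\ ``a non-main node'' misses the hard case: a $2$-cut whose two points are both main nodes of a deep copy but not of $F$. Handling this requires something like Lemma~\ref{f} in the paper, which shows (using goodness) that $F\setminus A$ is connected for every copy $A$ and that $\mathrm{ncp}(\overline{F\setminus A})<n-2$ whenever $A$ is a copy of level $\ge 2$ with $\sharp\,\partial_F A=2$; your one-line appeal to goodness does not substitute for that computation. You should either adopt the descent-plus-Lemma~\ref{f} strategy or replace the false claim with a correct, proved statement about where cut points of $\overline{C}$ can live.
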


\begin{thm}\label{mp1+}
If $F$ has no cut points and $k\in I$, then $F\setminus F_k$ is an extremal component of $F\setminus\{z_{k-1},z_k\}$.
Moreover, if $F$ is good, then $F\setminus F_k$ is the only extremal component of $F\setminus\{z_{k-1},z_k\}$.
\end{thm}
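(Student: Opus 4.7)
The plan is to decompose $F\setminus\{z_{k-1},z_k\}$ into pieces inside and outside $F_k$, confirm $F\setminus F_k$ is an extremal component by a direct count, and then use goodness to rule out competing extremal components inside $F_k$ via the homeomorphism $f_k$.

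First I would verify the separation
\[
F\setminus\{z_{k-1},z_k\}=\bigl(F_k\setminus\{z_{k-1},z_k\}\bigr)\sqcup\bigl(F\setminus F_k\bigr),
\]
which follows from $F_k$ being closed in $F$ with $F_k\cap(F\setminus F_k)=\emptyset$ and all common limit points lying in $\{z_{k-1},z_k\}$. Since $F$ has no cut points, each copy $F_j$ remains connected after removing a main node, so the cyclic chain
\[
F\setminus F_k=(F_{k+1}\setminus\{z_k\})\cup F_{k+2}\cup\cdots\cup F_{k-2}\cup(F_{k-1}\setminus\{z_{k-1}\})
\]
is glued together by the main nodes $z_{k+1},\dots,z_{k-2}$ and hence connected, making it a component of $F\setminus\{z_{k-1},z_k\}$. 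Its closure in $F$ is the full chain $F_{k+1}\cup\cdots\cup F_{k-1}$, in which each of the $n-2$ connecting main nodes is plainly a cut point. Combined with the bound $N(\{z_{k-1},z_k\},F)\leq N_2(F)=n-2$ from Theorem \ref{mp1}, this yields $\mbox{ncp}(\overline{F\setminus F_k})=n-2$ and hence extremality.

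For the uniqueness assertion under goodness, every competing component $C$ of $F\setminus\{z_{k-1},z_k\}$ sits inside $F_k\setminus\{z_{k-1},z_k\}$, so I would pull back via the homeomorphism $f_k:F\to F_k$: setting $\{u_k,v_k\}:=f_k^{-1}(\{z_{k-1},z_k\})$, Lemma \ref{inv} gives $\mbox{ncp}(\overline{C})=\mbox{ncp}(\overline{f_k^{-1}(C)})$, where $f_k^{-1}(C)$ is a component of $F\setminus\{u_k,v_k\}$. The key step, and the main obstacle, is to argue that $\{u_k,v_k\}$ is not an extremal $2$-cut of $F$: if $\{u_k,v_k\}=\{z_{i-1},z_i\}$ for some $i$, then both points would sit in the single copy $F_i$, and applying $f_k$ would force $\sharp(F_{ki}\cap\{z_{k-1},z_k\})=2$, contradicting goodness. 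By Theorem \ref{mp1} we conclude that $\{u_k,v_k\}$ fails extremality, so either $F\setminus\{u_k,v_k\}$ is connected with closure $F$ (hence with $0$ cut points) or $N(\{u_k,v_k\},F)<n-2$; in either case every component inside $F_k$ has closure with fewer than $n-2$ cut points, finishing uniqueness.
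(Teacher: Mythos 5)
Your proof is correct, and the first half (showing $F\setminus F_k$ is an extremal component) follows essentially the same route as the paper: verify the decomposition, count the $n-2$ cut points of $\overline{F\setminus F_k}$, and cap the count by $N_2(F)=n-2$ from Theorem \ref{mp1}.

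Your uniqueness argument, however, takes a genuinely different and in some ways cleaner route. The paper argues directly inside $F_k$: if some component $C\neq F\setminus F_k$ is proper in $F_k\setminus\{z_{k-1},z_k\}$, it asserts that $z_{k-1},z_k$ must be nonadjacent main nodes of $F_k$ (the ``main node'' part being a step that really requires Lemma \ref{lcp} applied to $F_k$ together with goodness, and which the paper states rather tersely) and then applies Lemma \ref{non} to $F_k$ to get $\mbox{ncp}(\overline{C})<n-2$. You instead pull back through $f_k$, observe that $\{u_k,v_k\}=f_k^{-1}(\{z_{k-1},z_k\})$ cannot equal any $\{z_{i-1},z_i\}$ (since $\{z_{i-1},z_i\}\subset F_i$ would force $\{z_{k-1},z_k\}\subset F_{ki}$, killing goodness), and then invoke the full classification of extremal $2$-cuts from the second implication of Theorem \ref{mp1}. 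What the paper's route buys is self-containment — it only leans on the elementary Lemmas \ref{lcp} and \ref{non}. What your route buys is that the delicate ``they are main nodes'' step disappears into the already-proved Theorem \ref{mp1}, making the argument more mechanical; the trade-off is that you invoke a heavier result whose proof uses the same machinery. Both are valid.

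One small gap worth noting for completeness: when you write that if $\{u_k,v_k\}$ is not a cut then $F\setminus\{u_k,v_k\}$ has closure $F$, you should also say explicitly that this closure equals $F$ because $F$ has no isolated points, and that the single component inside $F_k$ is then $f_k(F\setminus\{u_k,v_k\})=F_k\setminus\{z_{k-1},z_k\}$ whose closure $F_k$ has no cut points. You clearly have this in mind, but it is worth spelling out since it mirrors the paper's ``$C=F_k\setminus\{z_{k-1},z_k\}$'' case.
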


The assumption that $F$ is good can not be removed off for the related results in Theorem \ref{mp1} and Theorem \ref{mp1+}.


\begin{ex}{\rm Let $T$ be a closed solid triangle of vertices $0, 1, v$ in the complex plane, whose corresponding angles $\alpha,\beta,\gamma$ satisfy $4\beta<2\alpha<\gamma$. Appropriately choosing a real number $a\in(0,1)$, we may construct a planar self-similar necklace $F$ by $4$ similarity maps as in Figure 2, such that its ordered main nodes $z_1, z_2, z_3, z_4$ are $0, a, a+(1-a)v,v$ respectively. This necklace is not good and has no cut points (see \cite{WF}, Theorem 2). By the first implication of Theorem \ref{mp1} we have $N_2(F)=2$. We easily check that $$\{a+a(1-a), a+a(1-a)+(1-a)^2v\}$$ is an extremal $2$-cut of $F$, but it is not equal to $\{z_{k-1},z_k\}$ for any $k\in \{1,2,3,4\}$. On the other hand, $F\setminus\{z_2,z_3\}$ has three components, two of which are extremal.}
\end{ex}

\begin{figure}[htbp]
{\begin{minipage}{7.2cm}
\centering
\includegraphics[width=7.2cm]{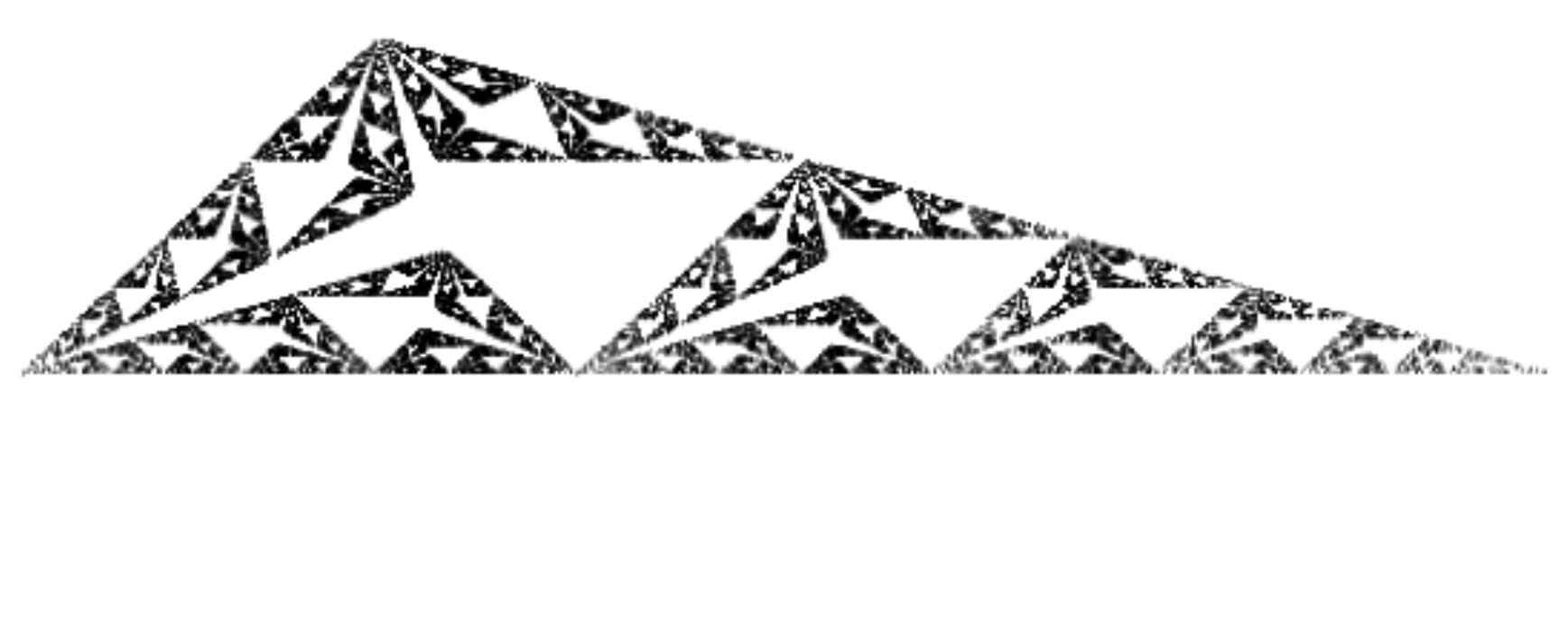}
\end{minipage}}
\caption{A necklace that is not good and has no cut points.}
\end{figure}

Let $i_1i_2\cdots i_m\in I^m$.
Since $f_1,f_2,\cdots,f_n$ have been assumed to be homeomorphisms of $\mathbb{R}^d$, we easily see that $F_{i_1i_2\cdots i_m}$ is a necklace with an induced NIFS $$\{f_{i_1\cdots i_m}\circ f_1\circ f_{i_1\cdots i_m}^{-1},\,\, f_{i_1\cdots i_m}\circ f_2\circ f_{i_1\cdots i_m}^{-1},\, \,\cdots,\,\, f_{i_1\cdots i_m}\circ f_n\circ f_{i_1\cdots i_m}^{-1}\}$$ whose main nodes are $f_{i_1\cdots i_m}(z_1),\, f_{i_1\cdots i_m}(z_2),\, \cdots,\, f_{i_1\cdots i_m}(z_n)$ and whose $1$-level copies are $F_{i_1\cdots i_m1},\, F_{i_1\cdots i_m2},\, \cdots,\, F_{i_1\cdots i_mn}$.  Let
$$M_F=\bigcup_{m=0}^\infty\,\bigcup_{i_1\cdots i_m\in I^m}\{f_{i_1\cdots i_m}(z_1),\, f_{i_1\cdots i_m}(z_2),\, \cdots,\, f_{i_1\cdots i_m}(z_n)\}.$$ Therefore $z\in M_F$ if and only if $z$ is a main node of some copy of $F$.

\medskip

\emph{From now on we assume that $F$ has no cut points.} Thus each copy of $F$ has no cut points.

\medskip

The proof of Theorems \ref{mp1} and \ref{mp1+} will occupy the rest part of this section. The connectedness and local connectedness of a necklace $F$ and the assumption that  $F$ has no cut points will be used frequently.

\begin{lem}\label{non}
Suppose $F$ has no cut points. Let $k,m\in I$, $k\neq m$. Then we have the following statements.

1) $\{z_k ,z_m\}$ is  a cut of $F$.

2) If $z_k$ and $z_m$ are not adjacent then $F\setminus\{z_{k}, z_{m}\}$ has exactly two components and $N(\{z_{k}, z_{m}\}, F)<n-2$.

3) $F\setminus F_k$ is a component of $F\setminus\{z_{k-1}, z_{k}\}$.

4) The set of cut points of $\overline{F\setminus F_k}$ is $\{z_1,z_2,\cdots, z_n\}\setminus\{z_{k-1},z_k\}.$

5) For each $z\in \{z_1,z_2,\cdots, z_n\}\setminus\{z_{k-1},z_k\}$, $\overline{F\setminus F_k}\setminus \{z\}$ has exactly two components, one containing $z_{k-1}$ and the other containing $z_k$.
\end{lem}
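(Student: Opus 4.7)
The plan rests on the ``cyclic chain'' picture of $F$: the copies $F_1,\dots,F_n$ are compact and connected, pairwise disjoint except for $F_j\cap F_{j+1}=\{z_j\}$, and each $F_j$, being homeomorphic to $F$, inherits the ``no cut points'' property. I will repeatedly invoke the following \emph{chain observation}: for any cyclically consecutive run $C=F_{j_1}\cup F_{j_1+1}\cup\cdots\cup F_{j_r}$ of $r\le n-1$ copies, $C$ is closed and connected in $F$, and a point $p\in C$ is a cut point of $C$ exactly when $p$ is one of the $r-1$ interior junction nodes $z_{j_1},\dots,z_{j_r-1}$. The ``cut'' direction is the explicit separation of $C\setminus\{z_{j_t}\}$ into $(F_{j_1}\cup\cdots\cup F_{j_t})\setminus\{z_{j_t}\}$ and $(F_{j_t+1}\cup\cdots\cup F_{j_r})\setminus\{z_{j_t}\}$, both connected by no cut points applied to $F_{j_t}$ and $F_{j_t+1}$; the ``non-cut'' direction for a non-junction $p$ uses no cut points on the single copy containing $p$, together with preservation of the interior junctions.

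For part 1, decompose $F=A_1\cup A_2$ into the two subchains whose intersection is exactly $\{z_k,z_m\}$. Each $A_i$ is closed, connected, and infinite, so $A_i\setminus\{z_k,z_m\}$ is nonempty and relatively closed in $F\setminus\{z_k,z_m\}$; the two pieces are disjoint and partition the latter, forcing disconnection. Minimality is free since $F$ has no cut points. For part 2 (non-adjacent case), $z_k$ and $z_m$ are endpoint main nodes of \emph{both} subchains, so the chain observation gives $A_i\setminus\{z_k,z_m\}$ connected---hence exactly two components. Writing $r_i$ for the number of copies in $A_i$, we have $r_1+r_2=n$ with $2\le r_i\le n-2$, and since $\overline{A_i\setminus\{z_k,z_m\}}=A_i$, the chain observation identifies its cut points as the interior junctions, numbering $r_i-1\le n-3$; hence $N(\{z_k,z_m\},F)\le n-3<n-2$.

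For part 3, identify $\overline{F\setminus F_k}=F_{k+1}\cup\cdots\cup F_{k-1}$, a chain of $n-1$ copies with $\{z_{k-1},z_k\}$ as its endpoint main nodes; the chain observation then gives $F\setminus F_k=\overline{F\setminus F_k}\setminus\{z_{k-1},z_k\}$ connected. It is open in $F$ and its $F$-closure adds back precisely those two endpoints, so it is clopen in $F\setminus\{z_{k-1},z_k\}$ and thus a full component. Parts 4 and 5 are direct consequences of the chain observation applied to $\overline{F\setminus F_k}$: its cut points are exactly the $n-2$ interior junctions $\{z_{k+1},\dots,z_{k-2}\}=\{z_1,\dots,z_n\}\setminus\{z_{k-1},z_k\}$, and removing any interior $z_j$ yields the two components $(F_{k+1}\cup\cdots\cup F_j)\setminus\{z_j\}$ (containing the endpoint $z_k$) and $(F_{j+1}\cup\cdots\cup F_{k-1})\setminus\{z_j\}$ (containing the endpoint $z_{k-1}$).

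The main obstacle is the ``non-cut'' direction of the chain observation: one must verify that every non-junction $p\in F_j\subset C$ fails to disconnect $C$. Here the no-cut-points property of the single copy $F_j$ gives $F_j\setminus\{p\}$ connected, and the rest of $C$ glues on via the surviving interior junctions; endpoint main nodes of $C$, if they are among the removed points, sit in only one copy of $C$ and are handled by the same argument. Everything else in the proof is bookkeeping of which main nodes remain after the removal.
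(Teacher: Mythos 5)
Your proof is correct and fills in the ``It is obvious'' claim in the paper with the natural cyclic-chain argument that the author clearly had in mind. The only checks worth being slightly more careful about in a polished write-up are: (i) when removing two endpoints simultaneously from a subchain of at least two copies (as in part 2), one should note that the two endpoints lie in distinct copies, each of which remains connected after deleting one point, and the surviving interior junctions reglue the pieces; and (ii) in part 1's adjacent case the subchain $A_1$ consisting of a single copy may become disconnected after removing two points, but that is harmless since you only need the global disconnection. Both points are implicit in your sketch and the underlying reasoning is sound.
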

\begin{proof}
It is obvious.
\end{proof}

\begin{lem}\label{lcp}
Suppose $F$ has no cut points and $\{z,w\}$ is a cut of $F$. Let $A=F_{i_1i_2\cdots i_m}\in\mathcal{C}(F)$ be the smallest copy such that $\{z,w\}$ is a cut of $F_{i_1i_2\cdots i_j}$ for each $0\leq j\leq m$. Then we have

1) Both $z$ and $w$ are main nodes of $A$, and

2) $A\setminus\{z,w\}$ has exactly two components with $N(\{z,w\}, A)\leq n-2$.
\end{lem}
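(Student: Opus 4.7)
The plan is a case analysis based on the positions of $z$ and $w$ relative to the induced decomposition $A = A_1\cup\cdots\cup A_n$, where $A_i$ are the $1$-level children of $A$ under its induced NIFS and $w_1,\ldots,w_n$ are the corresponding main nodes of $A$.

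For (1), I would argue by contradiction that $z$ must be one of the $w_k$. Suppose instead that $z\in A_j\setminus\{w_{j-1},w_j\}$ for a unique $j$. If $w\notin A_j$, then $A_j\setminus\{z\}$ is a connected subset of $A\setminus\{z,w\}$ (using that $A_j$ has no cut points) and therefore lies in a single component $C_0$. Taking a small neighborhood $U$ of $z$ in $A$ with $U\subset A_j\setminus\{w_{j-1},w_j\}$ and $w\notin U$, Lemma \ref{cs2}(1) forces $U\setminus\{z\}$ to meet every component of $A\setminus\{z,w\}$, while $U\setminus\{z\}\subset A_j\setminus\{z\}\subset C_0$ shows it can meet only one --- contradicting that $\{z,w\}$ is a cut. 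If instead $w\in A_j$, then the minimality of $A$ implies that $\{z,w\}$ is not a cut of $A_j$, so $A_j\setminus\{z,w\}$ is connected; I would then check directly that $A\setminus\{z,w\}$ is itself connected. When $w$ is also not a main node of $A$ every gluing main node of $A$ survives and the whole cycle remains intact; when $w=w_j$ (or $w_{j-1}$) the only broken gluing is $A_j$ to $A_{j+1}$, but traversing the other way around the cycle through $A_{j-1},A_{j-2},\ldots,A_{j+1}\setminus\{w_j\}$ and using that $A_{j+1}$ has no cut points still yields a connected union. Each case contradicts that $\{z,w\}$ is a cut, so $z$ is a main node of $A$, and by symmetry so is $w$.

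For (2), write $z=w_p$ and $w=w_q$ with $p\neq q$. The two main nodes split the cycle of children into the arcs $A_{p+1}\cup\cdots\cup A_q$ and $A_{q+1}\cup\cdots\cup A_p$, which meet only in $\{w_p,w_q\}$. I claim that each arc with $\{w_p,w_q\}$ removed is connected. For an arc of length at least two the two endpoint copies become $A_{p+1}\setminus\{w_p\}$ and $A_q\setminus\{w_q\}$ (each connected because copies have no cut points), and the interior copies are glued to them by internal main nodes of $A$ that are not removed. For a singleton arc --- i.e., when $w_p$ and $w_q$ are adjacent main nodes of $A$ --- connectedness of the single copy $A_{p+1}\setminus\{w_p,w_{p+1}\}$ is supplied precisely by the minimality of $A$. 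Hence $A\setminus\{w_p,w_q\}$ has exactly two components. For the bound $N(\{z,w\},A)\leq n-2$, the closure of each component is the corresponding chain of children of $A$, whose cut points are exactly the internal main nodes of that chain (endpoint main nodes do not disconnect an arc-like chain, and points interior to a single copy cannot disconnect because that copy has no cut points). The numbers of internal main nodes of the two chains sum to $n-2$, so the maximum is at most $n-2$.

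The main obstacle will be the subcase of (1) in which $w$ is a main node of $A$ lying in the same child $A_j$ as $z$, since removal of $\{z,w\}$ then destroys a gluing between adjacent children. The minimality of $A$ is essential here: without it one cannot conclude that $A_j\setminus\{z,w\}$ is connected, and the ``long way around'' argument in the cycle would break down. Once that subcase is cleared, the remaining parts follow routinely from the cyclic structure of a necklace together with the hypothesis that copies have no cut points.
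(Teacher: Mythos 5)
Your proof is correct, but it takes a somewhat different route than the paper's. For part (1), the paper handles both of your cases in a single stroke via the Remark following Lemma \ref{cs2}: supposing $z$ lies in the interior (relative to $A$) of some child $B\in\mathcal{C}_1(A)$, since $z$ is a cut point of $A\setminus\{w\}$, the Remark yields that $z$ is a cut point of $B\setminus\{w\}$; the no-cut-point property of $B$ then forces $w\in B$, and minimality of $A$ is contradicted because $\{z,w\}$ is now a cut of $B$. You instead split into $w\notin A_j$ and $w\in A_j$: in the first you essentially re-derive the Remark from Lemma \ref{cs2}(1) via the small-neighborhood argument, and in the second you use minimality to conclude $A_j\setminus\{z,w\}$ is connected and then exhibit $A\setminus\{z,w\}$ as connected by traversing the cycle the long way round, contradicting that $\{z,w\}$ cuts $A$. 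Both are valid; the paper's is more uniform (one contradiction covers everything), while yours makes the role of minimality in the case $w\in A_j$ more explicit. For part (2), the paper simply invokes Lemma \ref{non} applied to the necklace $A$ (nonadjacent main nodes give strictly fewer than $n-2$ cut points; adjacent ones give exactly $n-2$ on the long side and, by minimality, $0$ on the short side), whereas you re-derive the two-component structure and the cut-point count from scratch out of the cyclic decomposition $A=A_1\cup\cdots\cup A_n$, counting internal gluing nodes directly. The substance is identical; the paper's treatment is just more modular since it reuses Lemma \ref{non}, while your ground-level derivation is self-contained and arguably more transparent about why the total cut-point count across the two arcs is exactly $n-2$.
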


\begin{proof} 1) Under the assumption, since $$\lim_{m\to\infty}\max_{C\in\mathcal{C}_m(F)}\mbox{diam}(C)=0,$$ the smallest copy $A=F_{i_1i_2\cdots i_m}$ such that $\{z,w\}$ is a cut of $F_{i_1i_2\cdots i_j}$ for each $0\leq j\leq m$ does exist.

To show that $z$ and $w$ are main nodes of $A$, it suffices to prove
$$\{z,w\}\cap \mbox{int}_AB=\emptyset$$ for each $B\in \mathcal{C}_1(A)$.
In fact, suppose there is a copy $B\in \mathcal{C}_1(A)$ such that $\{z,w\}\cap \mbox{int}_AB\neq\emptyset$. Without loss of generality assume $z\in\mbox{int}_AB$. As $z$ is a cut point of $A\setminus\{w\}$, it follows from Lemma \ref{cs2} that $z$ is a cut point of $B\setminus\{w\}$, so $w\in B$ and $\{z,w\}$ is a cut of $B$, contradicting the minimality of $A$.

2) There are two cases as follows.

Case 1. $z$ and $w$ are nonadjacent main nodes of $A$. By Lemma \ref{non}, $A$ has exactly two components with
$N(\{z,w\}, A)<n-2$.

Case 2. $z$ and $w$ are adjacent main nodes of $A$. Let $B\in \mathcal{C}_1(A)$ be the copy such that $\partial_AB=\{z,w\}$. By Lemma \ref{non}, $A\setminus B$ is a component of $A\setminus\{z,w\}$ whose closure has exactly $n-2$ cut points. On the other hand, by the minimality of $A$ we see that $B\setminus\{z,w\}$ is another component of $A\setminus\{z,w\}$ whose closure has no cut points. Thus $A\setminus\{z,w\}$ has exactly two components with
$N(\{z,w\}, A)=n-2$ in this case.
\end{proof}

\begin{lem}\label{pp1}
Suppose $F$ has no cut points, $\{z,w\}$ is a cut of $F$, and $i\in I$. If $\{z,w\}$ is a cut of $F_i$ then
\begin{equation}\label{app00}
N(\{z,w\}, F)\leq \max\{n-2, N(\{z,w\}, F_{i})\}.
\end{equation}
\end{lem}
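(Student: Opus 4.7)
The plan is to bound $\mbox{ncp}(\overline{C})$ for each component $C$ of $F\setminus\{z,w\}$ separately. Since $\{z,w\}\subset F_i$, the set $F\setminus F_i$ is connected (by part 3 of Lemma \ref{non}) and disjoint from $\{z,w\}$, so there is a unique component $C^*$ of $F\setminus\{z,w\}$ containing $F\setminus F_i$. Any component $C\neq C^*$ is contained in $F_i\setminus\{z,w\}$ and remains a component there; since $F_i$ is closed in $F$, $\overline{C}$ is the same whether computed in $F$ or in $F_i$, so $\mbox{ncp}(\overline{C})\leq N(\{z,w\},F_i)$. It therefore suffices to bound $\mbox{ncp}(\overline{C^*})$.

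I would split on whether $\{z,w\}$ equals $\{z_{i-1},z_i\}$. If it does, part 3 of Lemma \ref{non} yields $C^*=F\setminus F_i$, whence $\overline{C^*}=Y:=\overline{F\setminus F_i}$ has exactly $n-2$ cut points by part 4 of Lemma \ref{non}. Otherwise, the key step is to show that there is a unique component $E$ of $F_i\setminus\{z,w\}$ whose closure contains $\{z_{i-1},z_i\}$, and that $C^*\cap F_i=E$, so that $\overline{C^*}=Y\cup\overline{E}$ with $Y\cap\overline{E}=\{z_{i-1},z_i\}$.

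To prove this single-component claim I would apply Lemma \ref{lcp} to the necklace $F_i$: there is a smallest sub-copy $A$ of $F_i$ for which $\{z,w\}$ is a cut of every ancestor of $A$ up through $F_i$, and both $z,w$ are main nodes of $A$. A case analysis on whether $A=F_i$ or $A\subsetneq F_i$, combined with the fact that $z_{i-1},z_i$ are cyclically adjacent main nodes of $F_i$ in its own induced NIFS, forces those elements of $\{z_{i-1},z_i\}$ that lie in $F_i\setminus\{z,w\}$ to belong to a common component $E$; any element of $\{z_{i-1},z_i\}$ that already belongs to $\{z,w\}$ lies automatically in $\partial_{F_i}E=\{z,w\}\subseteq\overline{E}$ by part 1 of Lemma \ref{cs2}.

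Given the decomposition $\overline{C^*}=Y\cup\overline{E}$ glued along $\{z_{i-1},z_i\}$, I would observe that for any $p\in Y\setminus\{z_{i-1},z_i\}$, even if $p$ is a cut of $Y$, the set $\overline{C^*}\setminus\{p\}$ remains connected because $\overline{E}$ provides a path from $z_{i-1}$ to $z_i$; so the cut points of $\overline{C^*}$ are exactly those cut points $p$ of $\overline{E}$ for which some component of $\overline{E}\setminus\{p\}$ is disjoint from $\{z_{i-1},z_i\}$. Thus $\mbox{ncp}(\overline{C^*})\leq\mbox{ncp}(\overline{E})\leq N(\{z,w\},F_i)$, and combining the two cases gives $\mbox{ncp}(\overline{C^*})\leq\max\{n-2,N(\{z,w\},F_i)\}$, as required. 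The main obstacle is the single-component claim: its verification rests on the combinatorial position of $\{z,w\}$ relative to the nested sub-copy structure of $F_i$, and on the fact that a $2$-cut of $F_i$ cannot separate the cyclically adjacent main nodes $z_{i-1},z_i$.
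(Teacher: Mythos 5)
Your overall decomposition (split on whether $\{z,w\}=\{z_{i-1},z_i\}$, then bound $\mbox{ncp}(\overline{C^*})$ where $C^*$ is the component of $F\setminus\{z,w\}$ containing $F\setminus F_i$) matches the paper's, but your key ``single-component claim'' is false, and its purported justification rests on a wrong premise. You assert that $z_{i-1},z_i$ are ``cyclically adjacent main nodes of $F_i$ in its own induced NIFS.'' But $z_{i-1}$ and $z_i$ are the points of $\partial_F F_i$, i.e., the contact points of $F_i$ with its neighbors $F_{i-1}$ and $F_{i+1}$; in general they are not main nodes of $F_i$ at all (in the Sierpinski gasket they are vertices of the sub-triangle $F_i$, not edge midpoints), and when $F$ is good they are provably \emph{not} adjacent even when they happen to be main nodes, since otherwise some $F_{ij}$ would contain both, contradicting goodness. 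So the ``cannot separate cyclically adjacent nodes'' heuristic has no footing here.

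The substantive consequence is that $z_{i-1}$ and $z_i$ can lie in two distinct components $C$ and $D$ of $F_i\setminus\{z,w\}$. In that case $C^*\cap F_i$ is not a single component, $\overline{C^*}=Y\cup\overline{C}\cup\overline{D}$ is not of the form $Y\cup\overline{E}$ that you analyze, and $\overline{C\cup D}$ need not itself be a component of $F_i\setminus\{z,w\}$, so the inequality $\mbox{ncp}(\overline{C^*})\leq N(\{z,w\},F_i)$ does not follow from your reasoning. This is precisely the paper's Subcase 2, handled by a different mechanism: by Lemma \ref{cs2} one has $\overline{C}\cap\overline{D}=\{z,w\}$, so $\overline{C\cup D}$ is connected, and since $F_i$ has no cut points neither does $\overline{C\cup D}$; combined with statement 5) of Lemma \ref{non} this forces $\mbox{ncp}(\overline{C^*})=0$. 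Your argument for the case where exactly one component of $F_i\setminus\{z,w\}$ meets $\{z_{i-1},z_i\}$ (the paper's Subcase 1) is essentially correct, but you must add the two-component case to have a complete proof.
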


\begin{proof} There are two possible cases.

Case 1. $\{z, w\}=\{z_{i-1},z_{i}\}$. In this case, $F\setminus F_{i}$ is a component of $F\setminus\{z,w\}$ whose closure has exactly $n-2$ cut points and the other components of $F\setminus\{z,w\}$ are those of $F_{i}\setminus\{z,w\}$. Thus (\ref{app00}) holds.

Case 2. $\{z, w\}\neq\{z_{i-1},z_{i}\}$. Then, by Lemma \ref{cs2}, $F\setminus F_{i}$ is not a component of $F\setminus\{z,w\}$.
Let $A$ be the component of $F\setminus\{z,w\}$ with $$A\supset F\setminus F_{i}.$$

Subcase 1. $F_{i}\setminus\{z,w\}$ has only one component $B$ with $$B\cap\{z_{i-1},z_{i}\}\neq\emptyset.$$ In this subcase, one has $$A=(F\setminus F_{i})\cup B\mbox{ and }\{z_{i-1},z_{i}\}\subset \overline{B}.$$ Then, by the statement 5) of Lemma \ref{non}, we see that the cut points of $\overline{A}$ belong to those of $\overline{B}$, so $\mbox{ncp}(\overline{A})\leq\mbox{ncp}(\overline{B}).$

Subcase 2. $F_{i}\setminus\{z,w\}$ has two components $C$ and $D$ with $$C\cap\{z_{i-1},z_{i}\}=\{z_{i-1}\}\mbox{ and } D\cap\{z_{i-1},z_{i}\}=\{z_{i}\}.$$ In this subcase, $$A=(F\setminus F_{i})\cup C\cup D.$$ By Lemma \ref{cs2}, one has $\{z,w\}=\overline{C}\cap \overline{D}$, so $\overline{C\cup D}$ is connected, which together with the statement 5) of Lemma \ref{non} implies that the cut points of $\overline{A}$ belong to those of $\overline{C\cup D}$. As $F_i$ has no cut points, we easily see that $\overline{C\cup D}$ has no cut points, so $\mbox{ncp}(\overline{A})=0$.

Thus, for both subcases we have $\mbox{ncp}(\overline{A})\leq N(\{z,w\}, F_{i})$. As the other components of $F\setminus\{z,w\}$ belong to those of $F_{i}\setminus\{z,w\}$, we get $N(\{z,w\}, F)\leq N(\{z,w\}, F_{i})$, so (\ref{app00}) holds in Case 2.
\end{proof}

\noindent{\bf Proof of the first implication of Theorem \ref{mp1}.} Suppose $F$ has no cut points. We are going to show that $N_2(F)=n-2$ and that $\{z_{k-1} ,z_k\}$, $k\in I$, are extremal $2$-cuts of $F$.

First, we show
\begin{equation}\label{app0}
N(\{z,w\}, F)\leq n-2
\end{equation}
for each cut $\{z,w\}$ of $F$.

Let $F_{i_1i_2\cdots i_m}$ be the smallest copy such that $\{z,w\}$ is a cut of $F_{i_1i_2\cdots i_j}$ for each $0\leq j\leq m$. If $m=0$, (\ref{app0}) follows from Lemma \ref{lcp} directly. If $m\geq 1$, by Lemma \ref{pp1} we have
$$
N(\{z,w\}, F_{i_1i_2\cdots i_{j-1}})\leq \max\{n-2, N(\{z,w\}, F_{i_1i_2\cdots i_j})\}
$$
for all $1\leq j\leq m$, which together with Lemma \ref{lcp} implies
$$
N(\{z,w\}, F)\leq \max\{n-2, N(\{z,w\}, F_{i_1i_2\cdots i_m})\}\leq n-2.
$$
This proves (\ref{app0}), and thus we have $N_2(F)\leq n-2$.

Secondly, by Lemma \ref{non}, given $k\in I$, $\{z_{k-1} ,z_k\}$ is a $2$-cut of $F$ and $F\setminus F_k$ is a component of $F\setminus\{z_{k-1}, z_{k}\}$ with $\mbox{ncp}(\overline{F\setminus F_k})=n-2$, so $$N(\{z_{k-1} ,z_k\}, F)\geq  n-2,$$
which yields $N_2(F)\geq n-2$.

To sum up, we have $N_2(F)=N(\{z_{k-1} ,z_k\}, F)=n-2$  for each $k\in I$.
This completes the proof.

\medskip

Suppose $F$ has no cut points. Then $F\setminus F_k$ is connected with $$\mbox{ncp}(\overline{F\setminus F_k})=n-2$$ for each $k\in I$. However, $F\setminus F_{i_1i_2}$ may not be connected for $i_1i_2\in I^2$; see for example the necklace in Figure 1. And, in the case $F\setminus F_{i_1i_2}$ is connected, it is possible that $\mbox{ncp}(\overline{F\setminus F_{i_1i_2}})>n-2$; see for example the Sierpinski triangle. By contrast, we have the following lemma.

\begin{lem}\label{f}
Suppose $F$ is good. Then we have

1) $F\setminus A$ is connected for each $A\in\cup_{m=1}^\infty\mathcal{C}_m(F)$, and

2) $\mbox{ncp}(\overline{F\setminus A})<n-2$ for each $A\in\cup_{m=2}^\infty\mathcal{C}_m(F)$ with $\sharp\,\partial_FA=2$.
\end{lem}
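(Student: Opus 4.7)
Both statements will be proved by induction on the level $m$, using the decomposition
$$\overline{F\setminus A}=\overline{F\setminus F_{i_1}}\cup\overline{F_{i_1}\setminus A}$$
for $A=F_{i_1i_2\cdots i_m}\subseteq F_{i_1}$. Goodness passes to every copy, and by goodness $F_{i_1}$ is the smallest copy containing $\{z_{i_1-1},z_{i_1}\}$, so any proper sub-copy of $F_{i_1}$ omits at least one of these two points.

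For 1), the case $m=1$ is statement 3) of Lemma \ref{non}. For $m\geq 2$, the induction hypothesis applied to the good necklace $F_{i_1}$ makes $F_{i_1}\setminus A$ connected; $F\setminus F_{i_1}$ is connected by the same statement 3); and at least one of $z_{i_1-1},z_{i_1}$ lies in $F_{i_1}\setminus A$ and is simultaneously a limit of $F\setminus F_{i_1}$, so the union $F\setminus A$ is connected.

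For 2), the main step is the reduction: under $\sharp\partial_FA=2$, every cut point of $\overline{F\setminus A}$ is also a cut point of $\overline{F_{i_1}\setminus A}$. This hypothesis forces both $z_{i_1-1},z_{i_1}\in\overline{F_{i_1}\setminus A}$ (any such point lying in $A$ would belong to $\partial_FA$, which then coincides with $\partial_{F_{i_1}}A\subseteq\overline{F_{i_1}\setminus A}$), while statement 4) of Lemma \ref{non} shows that neither is a cut point of $\overline{F\setminus F_{i_1}}$. If $p$ is not a cut point of $\overline{F_{i_1}\setminus A}$, then $\overline{F_{i_1}\setminus A}\setminus\{p\}$ is connected; adjoining $\overline{F\setminus F_{i_1}}\setminus\{p\}$, which is connected and still meets the former at a point of $\{z_{i_1-1},z_{i_1}\}\setminus\{p\}$, keeps the union connected. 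The inductive step $m\geq 3$ is then immediate: $A$ is an $(m-1)$-level copy of $F_{i_1}$ with $\sharp\partial_{F_{i_1}}A=2$, so by the induction hypothesis $\mbox{ncp}(\overline{F_{i_1}\setminus A})<n-2$, and the reduction concludes.

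The real work is the base case $m=2$, where statement 4) of Lemma \ref{non} applied to $F_{i_1}$ gives $\mbox{ncp}(\overline{F_{i_1}\setminus A})=n-2$, so the reduction alone only yields $\leq n-2$. To drop by one I would exploit goodness a second time: the pair $\{z_{i_1-1},z_{i_1}\}$, viewed as main nodes of $F_{i_1}$, must be \emph{non-adjacent} in the cyclic order of $F_{i_1}$---otherwise the unique $1$-level sub-copy $F_{i_1j}$ whose $F_{i_1}$-boundary equals this pair would contain both, contradicting $\sharp(F_{i_1j}\cap\partial_FF_{i_1})\leq 1$. The linear-chain description of $\overline{F_{i_1}\setminus F_{i_1i_2}}$ together with statement 5) of Lemma \ref{non} applied inside $F_{i_1}$ then supplies a cut point $p$ of $\overline{F_{i_1}\setminus A}$ whose removal places $z_{i_1-1}$ and $z_{i_1}$ in different components; for such a $p$, $\overline{F\setminus F_{i_1}}$ reconnects the two sides, so $p$ is not a cut point of $\overline{F\setminus A}$. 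The main obstacle is precisely this non-adjacency-and-bridge count in the base case; once it is secured, the reduction propagates the inequality to all $m\geq 3$.
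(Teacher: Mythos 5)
Your decomposition $\overline{F\setminus A}=\overline{F\setminus F_{i_1}}\cup\overline{F_{i_1}\setminus A}$, the goodness argument showing $\partial_FF_{i_1}\subset\overline{F_{i_1}\setminus A}$, the reduction lemma ``non-cut points of $\overline{F_{i_1}\setminus A}$ stay non-cut points of $\overline{F\setminus A}$'', and the induction from a level-$2$ base case are all exactly the paper's plan. Part 1) is correct and matches the paper. In part 2) the reduction step and the inductive step $m\ge 3$ are sound.

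There is, however, a genuine gap in the base case $m=2$. You write ``the pair $\{z_{i_1-1},z_{i_1}\}$, viewed as main nodes of $F_{i_1}$, must be non-adjacent,'' and derive the separating cut point $p$ from that non-adjacency. But $z_{i_1-1}$ and $z_{i_1}$ are main nodes of $F$, not of $F_{i_1}$, and in general they are \emph{not} main nodes of $F_{i_1}$ at all. The standard Sierpinski gasket (which is a good necklace with $n=3$) already shows this: $\partial_FF_1$ consists of two midpoints of the big triangle, which sit at two of the three \emph{corners} of $F_1$, whereas the main nodes of $F_1$ are the midpoints of $F_1$'s own sides. So the non-adjacency dichotomy you invoke is vacuous in the typical case, and your stated justification for the existence of $p$ does not apply.

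The conclusion you want is still true, and the correct reason (the one the paper uses) is slightly different: by (e) both $z_{i_1-1},z_{i_1}$ lie in $\overline{F_{i_1}\setminus A}=\bigcup_{j\neq i_2}F_{i_1j}$, and goodness of the induced NIFS on $F_{i_1}$ gives $\sharp\bigl(F_{i_1j}\cap\{z_{i_1-1},z_{i_1}\}\bigr)\le 1$ for every $j$, hence $z_{i_1-1}$ and $z_{i_1}$ lie in two \emph{distinct} links $F_{i_1j_1}\ne F_{i_1j_2}$ of the chain, both $\ne F_{i_1i_2}$. Since the chain is a linear arc of $n-1$ links with $n-2$ cut points (the main nodes of $F_{i_1}$ other than $\partial_{F_{i_1}}A$), some cut point $p$ lies strictly between the links $F_{i_1j_1}$ and $F_{i_1j_2}$ and separates $z_{i_1-1}$ from $z_{i_1}$; one can always pick $p\notin\{z_{i_1-1},z_{i_1}\}$, again using the same goodness bound. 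From there your ``bridge'' argument finishes the base case as you intended. So the gap is local and repairable, but as written the non-adjacency step would fail.
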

\begin{proof} 1) The assumption implies that $F$ has no cut points; see \cite{WF}. So $F\setminus F_k$ is connected for each $k\in I$. Let $lj\in I^2$. Then $F\setminus F_{l}$ and $F_{l}\setminus F_{lj}$ are connected. Since $F$ is good, we may take a point $x\in (\partial_FF_{l})\setminus F_{lj}$. Then $(F\setminus F_{l})\cup\{x\}$ is connected. Observing $$((F\setminus F_{l})\cup\{x\})\cap (F_{l}\setminus F_{lj})=\{x\}$$ and $$F\setminus F_{lj}=(F\setminus F_{l})\cup\{x\}\cup(F_{l}\setminus F_{lj}),$$ we see that $F\setminus F_{lj}$ is connected. Inductively, we get that $F\setminus A$ is connected for each $A\in\cup_{m=1}^\infty\mathcal{C}_m(F)$.

2) We first prove that $\mbox{ncp}(\overline{F\setminus A})<n-2$ for each $A\in\mathcal{C}_2(F)$ with $\sharp\,\partial_FA=2$. Let such a copy $A$ be given. We may write $A=F_{lk}$ and $\partial_FF_{lk}=\{z,w\}$, where $lk\in I^2$. Then we have the following facts.

\medskip

(a) $\partial_{F_l}F_{lk}=\{z,w\}$, so $z$ and $w$ are two adjacent main nodes of $F_l$.

(b) $F_{l}\setminus F_{lk}$ is connected with $\mbox{ncp}(\overline{F_l\setminus F_{lk}})=n-2$.

(c) $\overline{F_{l}\setminus F_{lk}}=\cup_{j\in I,\,j\neq k}F_{lj}.$

(d) $F\setminus F_{lk}=(F\setminus F_{l})\cup(F_{l}\setminus F_{lk}).$

(e) $\partial_F(F\setminus F_{l})\subset\overline{F_{l}\setminus F_{lk}}.$

(f) Let $u$ be a main node of $F_l$ with $u\not\in\{z,w\}$. Then $u$ is a cut point of $\overline{F_{l}\setminus F_{lk}}$ and $\overline{F_{l}\setminus F_{lk}}\setminus\{u\}$ has exactly two components, one containing $z$ and the other containing $w$.

\medskip

We only prove (e). Since $\partial_F(F\setminus F_{l})=\partial_FF_{l}=\{z_{l-1},z_l\}$, the task is to show $\{z_{l-1},z_l\}\subset\overline{F_{l}\setminus F_{lk}}$. Suppose it is false, say $z_l\not\in\overline{F_{l}\setminus F_{lk}}$. Then one has $z_l\in F_{lk}$, so $z_l\in \partial_FF_{lk}$. By (a), $z_l$ is a main node of $F_l$, so there is a copy $F_{lj}$ such that $\{z_l\}= F_{lj}\cap F_{lk}$. Then by (c) we get $z_l\in \overline{F_{l}\setminus F_{lk}}$, a contradiction.

Now, since
$\overline{F\setminus F_{lk}}=\overline{F\setminus F_{l}}\cup\overline{F_{l}\setminus F_{lk}}$, we see from
the statement 5) of Lemma \ref{non} and (e) that the cut points of $\overline{F\setminus F_{lk}}$ belong to those of $\overline{F_{l}\setminus F_{lk}}$.
On the other hand, since $F$ is good, one has $(\partial_FF_{l})\setminus F_{lj}\neq \emptyset$ for each $j\in I$, so by (c) and (e) there are two distinct $j_1,j_2\in I\setminus\{k\}$ such that $z_{l-1}\in F_{lj_1}$ and $z_l\in F_{lj_2}$, which together with (f) implies that $\overline{F_{l}\setminus F_{lk}}$ has a cut point that is not any cut point of $\overline{F\setminus F_{lk}}$.
It then follows from (b) that $\mbox{ncp}(\overline{F\setminus F_{lk}})<n-2$.

Next let $A=F_{i_1i_2\cdots i_m}$, where $i_1i_2\cdots i_m\in I^m$, $m\geq 2$, and $\sharp\,\partial_FA=2$.
Then $F\setminus A$, $F\setminus F_{i_1}$ and $F_{i_1}\setminus A$ are connected,
$$\overline{F\setminus A}=\overline{F\setminus F_{i_1}}\cup \overline{F_{i_1}\setminus A}, \mbox{ and }\partial_F(F\setminus F_{i_1})=\partial_FF_{i_1}\subset \overline{F_{i_1}\setminus A}.$$
Thus we have by the statement 5) of Lemma \ref{non} that the cut points of $\overline{F\setminus A}$ belong to those of $\overline{F_{i_1}\setminus A}$, so
\begin{equation}\label{xgl}
\mbox{ncp}(\overline{F\setminus A})\leq\mbox{ncp}(\overline{F_{i_1}\setminus A}).
\end{equation}
Since $\sharp\,\partial_{F_{i_1\cdots i_{m-1}}}A=2$, the assumption $\sharp\,\partial_FA=2$ implies
$$
\partial_FA=\partial_{F_{i_1}}A=\cdots =\partial_{F_{i_1\cdots i_{m-1}}}A.
$$
We may repeatedly apply (\ref{xgl}) to get
$$\mbox{ncp}(\overline{F\setminus A})\leq\mbox{ncp}(\overline{F_{i_1i_2\cdots i_{m-2}}\setminus A}).$$
Since $F_{i_1i_2\cdots i_{m-2}}\setminus A$ and $F\setminus F_{i_{m-1} i_m}$ are topologically equivalent, we then get
$$
\mbox{ncp}(\overline{F\setminus A})\leq\mbox{ncp}(\overline{F\setminus F_{i_{m-1} i_m}}).
$$
As $\mbox{ncp}(\overline{F\setminus F_{i_{m-1} i_m}})<n-2$ is proved, we get $\mbox{ncp}(\overline{F\setminus A})<n-2$. \end{proof}

\noindent{\bf Proof of the second implication of Theorem \ref{mp1}.} Suppose $F$ is good. We are going to show that $\{z_{k-1} ,z_k\}$, $k\in I$, are the only extremal $2$-cuts of $F$.
As the first implication of Theorem \ref{mp1} is proved, it suffices to show
\begin{equation}\label{nn}
N(\{z,w\}, F)< n-2
\end{equation}
for each $2$-cut $\{z,w\}$ of $F$ with $\{z,w\}\not\in\{\{z_{k-1},z_k\}: k\in I\}$.

Let such a $2$-cut $\{z,w\}$ of $F$ be given. Let $A=F_{i_1i_2\cdots i_m}\in\mathcal{C}(F)$ be the smallest copy such that $\{z,w\}$ is a cut of $F_{i_1i_2\cdots i_j}$ for each $0\leq j\leq m$. By Lemma \ref{lcp}, $z$ and $w$ are main nodes of $A$ and $A\setminus\{z,w\}$ has exactly two components, which will be denoted by $B$ and $C$. Thus there is a subset $J$ of $I$ such that
\begin{equation}\label{clyx}
\overline{B}=\bigcup_{j\in J}F_{i_1i_2\cdots i_mj}\mbox{ \, and \,}\overline{C}=\bigcup_{j\in I\setminus J}F_{i_1i_2\cdots i_mj}.
\end{equation}
In addition, $F\setminus A$ is connected by 1) of Lemma \ref{f}. And we have $$F\setminus\{z,w\}=(F\setminus A)\cup B\cup C.$$

Case 1. $A=F$. By the assumption on $\{z,w\}$, we see that $z$ and $w$ are actually two nonadjacent main nodes of $F$. The inequality (\ref{nn}) follows by Lemma \ref{non}.

Case 2. $A\in\cup_{m=1}^\infty\mathcal{C}_m(F)$ and $\partial_FA=\{z,w\}$. By the assumption on $\{z,w\}$ we actually have $A\in\cup_{m=2}^\infty\mathcal{C}_m(F)$. In this case, $F\setminus A$, $B$, and $C$ are the only three components of $F\setminus\{z,w\}$. By 2) of Lemma \ref{f}, we have $$\mbox{ncp}(\overline{F\setminus A})<n-2.$$
On the other hand, since $F$ is good, we see that $z, w$ are actually two nonadjacent main nodes of $A$, which implies $$\mbox{ncp}(\overline{B})<n-2\mbox{\, and \,}\mbox{ncp}(\overline{C})<n-2.$$
Then the inequality (\ref{nn}) follows.

Case 3. $A\in\cup_{m=1}^\infty\mathcal{C}_m(F)$ and $\partial_FA\neq\{z,w\}$. Since $\partial_F(F\setminus A)=\partial_FA$, we have by Lemma \ref{cs2} that $F\setminus A$ is not a component of $F\setminus\{z,w\}$. Thus $F\setminus A$ meets exactly one of $B$ and $C$.
Without loss of generality assume that $(F\setminus A)\cap B\neq\emptyset$. Then $(F\setminus A)\cup B$ and $C$ are the only two components of $F\setminus\{z,w\}$ and we have $\partial_FA\subset \overline{B}$. Let $J$ be the subset of $I$ such that (\ref{clyx}) holds. Then $\sharp J\leq n-1$. Since $F$ is good, one also has $\sharp J\geq 2$, so
$$\mbox{ncp}(\overline{C})=\sharp(I\setminus J)-1<n-2.$$
Since $\partial_F(F\setminus A)=\partial_FA\subset \overline{B}$, arguing as we did in the proof of 2) of Lemma \ref{f}, we have that the cut points of $\overline{(F\setminus A)\cup B}$ belong to those of $\overline{B}$ and that $\overline{B}$ has at least one cut point that is not any cut point of $\overline{(F\setminus A)\cup B}$. Thus $$\mbox{ncp}(\overline{(F\setminus A)\cup B})<\mbox{ncp}(\overline{B})\leq \sharp J-1\leq n-2.$$
Then the inequality (\ref{nn}) follows. This completes the proof.

\medskip

\noindent{\bf Proof of Theorem \ref{mp1+}.} Suppose $F$ has no cut points. Fix $k\in I$. Then we have
$N_2(F)=n-2$ by the first implication of Theorem \ref{mp1}. Since $F\setminus F_k$ is a component of $F\setminus\{z_{k-1},z_k\}$ with $\mbox{ncp}(\overline{F\setminus F_k})=n-2$, it is an extremal component of $F\setminus\{z_{k-1},z_k\}$.

Now suppose $F$ is good. Let $C$ be a component of $F\setminus\{z_{k-1},z_k\}$ with $C\neq F\setminus F_k$. Then $C$ is a component of $F_k\setminus\{z_{k-1},z_k\}$. In the case $C=F_k\setminus\{z_{k-1},z_k\}$, one has $\mbox{ncp}(\overline{C})=0$. In the other case, since $F$ is good,  there is no copy $F_{kj}$ containing $\{z_{k-1},z_k\}$, so $z_{k-1}$ and $z_k$ are two nonadjacent main nodes of $F_k$, which implies $\mbox{ncp}(\overline{C})<n-2$. This proves that $F\setminus F_k$ is the only extremal component of $F\setminus\{z_{k-1},z_k\}$.

\section{The proof of main results}

\noindent{\bf Proof of Theorem \ref{mt1}.}
Let $F$ be a fractal necklace with a good NIFS $\{f_1,f_2,\cdots, f_n\}$ on $\mathbb{R}^d$. Then $F$ has no cut points. Let $\{g_1,g_2,\cdots, g_m\}$ be an arbitrary NIFS of $F$. We do not know if this NIFS is good at this stage. As the first implication of Theorem \ref{mp1} is valid for all necklaces with no cut points, we have
$$\mbox{$N_2(F)=n-2=m-2$,}$$
which yields $n=m$. We are going to show that there is a permutation $\sigma\in\mathcal{G}_n$ such that $g_k(F)=f_{\sigma(k)}(F)$ for each $k\in I$.

Let $z_1,z_2,\cdots,z_n$ be the ordered main nodes of $F$ under $\{f_1,\cdots, f_n\}$ and $w_1,w_2,\cdots,w_n$ be those of $F$ under $\{g_1,\cdots, g_n\}$.
Thus $$\{z_{k-1}, z_k\}=\partial_F(f_k(F))\mbox{\, and\, }\{w_{k-1}, w_k\}=\partial_F(g_k(F)).$$
By the first implication of Theorem \ref{mp1}, $\{w_{k-1},w_{k}\},\, k\in I,$ are extremal $2$-cuts of $F$. By the second implication of Theorem \ref{mp1}, $\{z_{k-1},z_{k}\},\, k\in I,$ are the only extremal $2$-cuts of $F$. Thus we have
\begin{equation}\label{hsy}
\{\{w_{k-1},w_{k}\}: k\in I\}=\{\{z_{k-1},z_{k}\}: k\in I\},
\end{equation}
which implies
$$\{w_1,w_2,\cdots, w_n\}=\{z_1,z_2,\cdots, z_n\} \mbox{\, as sets}.$$ Let $j\in I$ satisfy $\{w_1,w_2\}=\{z_{j-1},z_{j}\}$. Then we have
\begin{equation}\label{c2}
\mbox{$w_1=z_{j}$\, and\, $w_2=z_{j-1}$}
\end{equation}
or
\begin{equation}\label{c1}
\mbox{$w_1=z_{j-1}$\, and\, $w_2=z_{j}$}.
\end{equation}

First consider the case (\ref{c2}). Let $\sigma=\tau^{n-j}s\in\mathcal{G}_n$ be a permutation of $I$. By the definitions of $\tau$ and $s$ in Section 1 we have $\sigma(k)=j-k+1$ for each $k\in I$, hereafter we identify an integer $l$ with an integer $k\in I$ if $|l-k|=0$ or $n$. Thus (\ref{c2}) can be written as $w_1=z_{\sigma(1)}$ and $w_2=z_{\sigma(2)}$, which implies $w_k=z_{\sigma(k)}$ for each $k\in I$ by using (\ref{hsy}).

Fix $k\in I$. We then have
$$\{w_{k-1}, w_k\}=\{z_{\sigma(k-1)},z_{\sigma(k)}\}.$$
As $\sigma(k-1)=\sigma(k)+1$, we have $$\{z_{\sigma(k-1)},z_{\sigma(k)}\}=\partial_F(f_{\sigma(k-1)}(F)).$$
It follows that
$$\partial_F(g_k(F))=\partial_F(f_{\sigma(k-1)}(F)).$$
By the first implication of Theorem \ref{mp1+}, $F\setminus g_k(F)$ is an extremal component of $F\setminus\{w_{k-1}, w_k\}$. By the second implication of Theorem \ref{mp1+}, $F\setminus f_{\sigma(k-1)}(F)$ is the only extremal component of $F\setminus\{w_{k-1}, w_k\}$. Thus $$F\setminus g_k(F)=F\setminus f_{\sigma(k-1)}(F),$$ which yields $$g_k(F)=f_{\sigma(k-1)}(F)=f_{\sigma\tau^{-1}(k)}(F),$$ where $\tau^{-1}$ is the inverse of $\tau$. Of course $\sigma\tau^{-1}\in\mathcal{G}_n$.

As for the case (\ref{c1}), let $\sigma=\tau^{j-2}\in\mathcal{G}_n$ be a permutation of $I$. By a slightly easier argument, we get
$$\partial_F(g_k(F))=\partial_F(f_{\sigma(k)}(F)).$$
Using Corollary \ref{mp1+} as above, we have
$g_k(F)=f_{\sigma(k)}(F)$ for each $k\in I$. The proof is completed.

\medskip

\noindent{\bf Proof of Theorem \ref{rig}.} Let $F$ and $G$ be two topologically equivalent good necklaces in $\mathbb{R}^d$. For clarity let $\{f_1,f_2,\cdots, f_n\}$ be a NIFS of $F$ and $\{g_1,g_2,\cdots, g_m\}$ be a NIFS of $G$ on $\mathbb{R}^d$. Then, by Lemma \ref{inv} and Theorem \ref{mp1}, we have $n=m$. Denote by $z_1,z_2,\cdots,z_n$ the ordered main nodes of $F$ and by $w_1,w_2,\cdots,w_n$ those of $G$. We are going to show that every homeomorphism $h\in h(F,G)$ is rigid.

Fix $h\in h(F,G)$. Since $\{\{z_{k-1},z_k\}: k\in I\}$ is the family of extremal $2$-cuts of $F$ and $\{\{w_{k-1},w_k\}: k\in I\}$ is that of $G$ by Theorem \ref{mp1}, we have by Lemma \ref{inv}
$$\{\{h(z_{k-1}),h(z_k)\}: k\in I\}=\{\{w_{k-1},w_k\}: k\in I\}.$$
Now, arguing as we just did in the proof of Theorem \ref{mt1}, there is a permutation $\sigma\in \mathcal{G}_n$ such that $$\mbox{$h(f_k(F))=g_{\sigma(k)}(G)$}$$ for every $k\in I$. This shows that $h$ maps every $1$-level copy of $F$ onto a $1$-level copy of $G$. Inductively, one has that $h$ maps every $m$-level copy of $F$ onto an $m$-level copy of $G$ for every integer $m\geq 1$. Thus $h$ is rigid.

Next we show that $h(F,G)$ is countable. For every integer $m\geq 0$ write
$$M_{F,m}=\bigcup_{i_1i_2\cdots i_m\in I^m}f_{i_1i_2\cdots i_m}\{z_1,z_2,\cdots,z_n\}$$
and
$$M_{G,m}=\bigcup_{i_1i_2\cdots i_m\in I^m}g_{i_1i_2\cdots i_m}\{z_1,z_2,\cdots,z_n\}.$$
Then $M_{F,m}$ and $M_{G,m}$ are finite sets. Let $$M_F=\bigcup_{m=0}^\infty M_{F,m}\mbox{\, and \,}M_G=\bigcup_{m=0}^\infty M_{G,m}.$$
Then $M_F$ is dense in $F$ and $M_G$ is dense in $G$. Let $h\in h(F,G)$. As $h$ was shown to be rigid, the restriction $h|_{M_F}$ of $h$ is a bijection from  $M_F$ onto $M_G$ and satisfies $h(M_{F,m})=M_{G,m}$ for each $m\geq 0$.

Let $\Phi$ be the collection of bijections $\phi: M_F\to M_G$ satisfying
$\phi(M_{F,m})=M_{G,m}$ for each $m\geq 0$. Clearly, $\Phi$ is countable and
$$\{h|_{M_F}: h\in h(F,G)\}\subseteq \Phi,$$ so $\{h|_{M_F}: h\in h(F,G)\}$ is countable.

To prove that $h(F,G)$ is countable, it suffices to show the correspondence $h\to h|_{M_F}$ from $h(F,G)$ to $\{h|_{M_F}: h\in h(F,G)\}$
is one-to-one. In fact, let $h, \widetilde{h}\in h(F,G)$, $h\neq \widetilde{h}$, then there is a point $z\in F$ such that $h(z)\neq \widetilde{h}(z)$, so $h(A)$ and $\widetilde{h}(A)$ are disjoint for sufficiently small copy $A$ of $F$ with $z\in A$, and so $h|_{M_F}\neq \widetilde{h}|_{M_F}$ due to $M_F$ dense in $F$. This completes the proof.

\medskip

\noindent{\bf Proof of Theorem \ref{emb}.}
Let $F$ and $G$ be two topologically equivalent good fractal necklaces. Let $h:F\to G$ be a topological embedding. We are going to show that $h(F)$ is a copy of $G$.

For clarity let $\{f_1,f_2,\cdots, f_n\}$ be a NIFS of $F$ and $\{g_1,g_2,\cdots, g_m\}$ be a NIFS of $G$. Denote by $z_1,z_2,\cdots,z_n$ the main nodes of $F$ and by $w_1,w_2,\cdots,w_n$ those of $G$.

Let $A$ be the smallest copy of $G$ such that $h(F)\subseteq A$. Without loss of generality assume $A=G$. We are going to prove $h(F)=G$. Since $M_G$ is dense in $G$, it suffices to show $h(F)\supset M_G$.

By the assumption, $h(F)$ meets $\mbox{int}_GB$ for at least two $1$-level copies $B$ of $G$. Since $h(F)$ has no cut points, we see that $h(F)$ actually meets $\mbox{int}_GB$ for all $1$-level copies $B$ of $G$, so
\begin{equation}\label{li}
h(F)\supset\{w_1,w_2,\cdots, w_n\},\mbox{\, i.e. \,} h(F)\supset M_{G,0}.
\end{equation}
Next we prove $h(F)\supset M_{G,1}$. It suffices to show for each $k\in I$
\begin{equation}\label{gm}
h(F)\supset\{ g_k(w_1),g_k(w_2),\cdots, g_k(w_n)\}.
\end{equation}

By Theorem \ref{mp1}, $\{\{z_{k-1},z_k\}: k\in I\}$ is the family of extremal $2$-cuts of $F$, so $\{\{h(z_{k-1}),h(z_k)\}: k\in I\}$ is the family of extremal $2$-cuts of $h(F)$ by Lemma \ref{inv}. On the other hand, $\{w_{k-1},w_k\},\, k\in I,$ are obviously extremal $2$-cuts of $h(F)$ by (\ref{li}). It follows that
$$\{\{w_{k-1},w_k\}: k\in I\}=\{\{h(z_{k-1}),h(z_k)\}: k\in I\},$$
which implies
\begin{equation}\label{zhf}
\{h(z_1), h(z_2), \cdots, h(z_n)\}=\{w_1,w_2,\cdots, w_n\}\mbox{\, as sets}.
\end{equation}
Then, arguing as we just did in the proof of Theorem \ref{mt1}, there is a permutation $\sigma\in \mathcal{G}_n$ such that for each $k\in I$
\begin{equation}\label{zhf1}
\partial_{h(F)}(h(f_{\sigma(k)}(F)))=\partial_G(g_k(G))=\{w_{k-1}, w_k\}.
\end{equation}

Now fix $k\in I$. By (\ref{zhf1}) and the arguments of (\ref{li}), one has
$$
h(f_{\sigma(k)}(F))\subseteq g_k(G)\mbox{\,\, or \, \,} h(f_{\sigma(k)}(F))\supset\{w_1,w_2,\cdots, w_n\},
$$
in which the latter case does not occur because it contradicts (\ref{zhf}). Thus we have
$$
h(f_{\sigma(k)}(F))\subseteq g_k(G).
$$
Moreover, since $G$ is good, we see by (\ref{zhf1}) that $g_k(G)$ is the smallest copy containing $h(f_{\sigma(k)}(F))$.
Applying (\ref{li}) to the topological embedding $h: f_{\sigma(k)}(F)\to g_k(G)$, we get
$$h(f_{\sigma(k)}(F))\supset \{ g_k(w_1),g_k(w_2),\cdots, g_k(w_n)\},$$
which implies (\ref{gm}) and thus $h(F)\supset M_{G,1}$.

Inductively, we have $h(F)\supset M_{G,m}$ for every integer $m\geq 0$, and so $h(F)\supset M_{G}$. This completes the proof.

\medskip

\noindent{\bf Acknowledgement.} The author thanks Professors Fang Fuquan and Shigeki Akiyama for their helpful suggestions.

\end{document}